\newcommand{\R}{\mathbb{R}}
\newcommand{\dif}[0]{\ensuremath{\,\mathrm{d}}}
\newcommand{\norm}[1]{\ensuremath{\Vert #1 \Vert}}
\newcommand{\inprod}[1]{\ensuremath{\langle #1 \rangle}}
\newcommand{\abs}[1]{\ensuremath{\vert #1 \vert}}
\newcommand{\scabs}[1]{\ensuremath{\left\vert #1 \right\vert}}
\newcommand{\A}[0]{\ensuremath{\mathcal{A}}}
\renewcommand{\S}[0]{\ensuremath{\mathcal{S}}}
\renewcommand{\L}[0]{\ensuremath{\mathcal{L}}}
\newcommand{\loc}[0]{\ensuremath{\mathrm{loc}}}
\DeclareMathOperator*{\dive}{div}
\def\vint_#1{\mathchoice%
          {\mathop{\kern 0.2em\vrule width 0.6em height 0.69678ex depth -0.58065ex
                  \kern -0.8em \intop}\nolimits_{\kern -0.4em#1}}%
          {\mathop{\kern 0.1em\vrule width 0.5em height 0.69678ex depth -0.60387ex
                  \kern -0.6em \intop}\nolimits_{#1}}%
          {\mathop{\kern 0.1em\vrule width 0.5em height 0.69678ex depth -0.60387ex
                  \kern -0.6em \intop}\nolimits_{#1}}%
          {\mathop{\kern 0.1em\vrule width 0.5em height 0.69678ex depth -0.60387ex
                  \kern -0.6em \intop}\nolimits_{#1}}}
\theoremstyle{plain}
\newtheorem{theorem}{Theorem}
\newtheorem{lemma}[theorem]{Lemma}
\numberwithin{theorem}{section}
\numberwithin{equation}{section}
\theoremstyle{definition}
\newtheorem{definition}[theorem]{Definition}
\newcommand{\Om}{\Omega}
\newcommand{\vp}{\varphi}
\newcommand{\ud}{\, d}
\title[A local approximation result]{Local approximation of superharmonic and superparabolic functions in nonlinear potential theory}
\author{Juha Kinnunen, Teemu Lukkari, Mikko Parviainen}
\address[Juha Kinnunen]{Department of Mathematics, Aalto University, P.O. Box 11100,
FI-00076 Aalto, Finland}
\email{juha.k.kinnunen@aalto.fi}
\address[Teemu Lukkari]{Department of Mathematics and Statistics, P.O. Box 35 (MaD),
FI-40014 University of Jyv\"askyl\"a, Finland}
\email{teemu.lukkari@jyu.fi}
\address[Mikko Parviainen]{Department of Mathematics and Statistics, P.O. Box 35 (MaD),
FI-40014 University of Jyv\"askyl\"a, Finland}
\email{mikko.j.parviainen@jyu.fi}
\subjclass[2010]{35K55, 31C45.}
\keywords{Parabolic $p$-Laplace, $p$-parabolic, measure data problem }
\dedicatory{Dedicated to Professor Bogdan Bojarski on the occasion of his 80th birthday}
\begin{document}

\begin{abstract}
  We prove that arbitrary superharmonic functions and
  superparabolic functions related to the $p$-Laplace and the $p$-parabolic equations
  are locally obtained as limits of supersolutions with desired convergence properties
  of the corresponding Riesz measures.
  As an application we show that a family of uniformly
  bounded supersolutions to the $p$-parabolic equation contains a
  subsequence that converges to a supersolution.
\end{abstract}

\maketitle

\section{Introduction}

This work gives approximation results related to the stationary $p$-Laplace equation
\begin{equation}\label{eq:p-laplace}
  \Delta_pu=\dive(\abs{\nabla u}^{p-2}\nabla u)=0
\end{equation}
and the time dependent  $p$-parabolic equation
\begin{equation}\label{eq:p-parabolic}
  \partial_tu-\Delta_pu=0.
\end{equation}
The results and arguments extend to a more general class of equations as well,
but we restrict our attention to the prototype equations for simplicity.
We have also deliberately decided to exclude the singular case $p<2$ from
our exposition.
The solutions of these equations form a similar basis for a nonlinear
potential theory as the solutions
of the Laplace and heat equations do in the classical theory.
Equation \eqref{eq:p-parabolic} is also known as the evolutionary $p$-Laplace
equation and the non-Newtonian filtration equation
in the literature.
For the elliptic regularity regularity theory we refer to \cite{BI}, \cite{HKM} and parabolic to \cite{DiB}.

In the nonlinear potential theory, so-called $p$-superharmonic and $p$-superparabolic
functions are essential, see  \cite{HKM},  \cite{KL} and \cite{LindqvistCrelle}.
They are defined as lower semicontinuous functions obeying
the comparison principle with respect to continuous solutions of the corresponding equation.
The main focus of this work is in the parabolic theory, but some observations may be
of interest already in the elliptic case.
In their definition, the superparabolic
functions are not required to have any derivatives,
and, consequently, it is not evident how to directly relate them to the equation.
However, by \cite{KinnunenLindqvist2} a $p$-superparabolic function has
spatial Sobolev derivatives with sharp local integrability bounds.
See also \cite{BG},  \cite{BDGO} and \cite{KilpelainenMaly1}.
Using this result we can show that every
superparabolic function $u$ satisfies the equation with measure data
\[
  \partial_t u-\dive(\abs{\nabla u}^{p-2}\nabla u)=\mu,
\]
where $\mu$ is the Riesz measure of $u$.  A rather delicate, yet relevant, point here
is that the spatial gradient of a superparabolic function is not
locally integrable to the natural exponent $p$.  Consequently, the
Riesz measure does not belong to the dual of the natural parabolic
Sobolev space.  For example, Dirac's delta is the Riesz measure for
the Barenblatt solution of the $p$-parabolic equation.

Since the weak supersolutions belong to the natural Sobolev, they
constitute a more tractable class of functions.
If we have an increasing sequence of
continuous supersolutions and the limit function is finite in
dense subset, then the limit function is superparabolic.
Moreover, if the limit function is bounded or belongs to the correct parabolic Sobolev space,
then it is a supersolution.
In fact, this gives a characterization of superparabolic functions.
The approximating sequence of supersolutions is usually constructed
through obstacle problems, see \cite{HKM} and \cite{LindqvistCrelle}
for the elliptic case and \cite{KinnunenLindqvist2} for the parabolic counterpart.
On the other hand, the standard approach in existence theory is to
approximate the Riesz measure of a superparabolic function
with smooth functions as in \cite{BG}, \cite{BDGO}, \cite{D} and \cite{KLP}.

Our argument is a combination of these procedures.
More precisely, we show that for a given superparabolic function, there
is a sequence of supersolutions, such that the Riesz measures are smooth
functions, which converges to the original function both in elliptic and
parabolic cases. This also provides a useful tool for existence results, because
our method gives a converging subsequence for any bounded family of supersolutions.
This is needed, for example, the proof of Theorem 4.3 in \cite{KLP}, and we
provide a detailed argument here.
In Appendix, we also provide a useful technical result related to the approximation of a
measure in the dual of the Sobolev space. This result may be of independent interest.

\section{The $p$-Laplace equation}

We will assume throughout the paper that $p>2$.
As usual, $W^{1,p}(\Omega)$ denotes the Sobolev space of functions in
$L^p(\Omega)$, whose distributional gradient belongs to $L^p(\Omega)$.
The space $W^{1,p}(\Omega)$ is equipped with the norm
\[
\|u\|_{W^{1,p}(\Omega)}
=\|u\|_{L^p(\Omega)}+\|\nabla
      u\|_{L^p(\Omega)}.
\]
The Sobolev space with zero boundary values, denoted by
$W^{1,p}_0(\Omega)$, is a completion of $C^{\infty}_0(\Omega)$ with
respect to the norm of $W^{1,p}(\Omega)$.

\begin{definition}
Let $\Omega$ be a bounded domain in $\R^n$.  A function $u\in
W^{1,p}_{\loc}(\Omega)$ is a weak solution of \eqref{eq:p-laplace}, if
\begin{displaymath}
  \int_{\Omega}\abs{\nabla u}^{p-2}\nabla u\cdot\nabla \varphi\dif x=0
\end{displaymath}
for all $ \varphi\in C^\infty_0(\Omega)$. Further, $u$ is a weak
supersolution if the integral is nonnegative for all nonnegative
test functions $\varphi$.
\end{definition}

By elliptic regularity theory, weak solutions  are locally H\"older continuous after a possible redefinition on a set of measure
zero and weak supersolutions are lower semicontinuous with the same interpretation, see for example \cite{HKM}.

Our argument also applies to more general equations of the type
\[
\dive\A(x,\nabla u)=0
\]
with the structural conditions
\begin{enumerate}
\item $x\mapsto\A(x,\xi)$ is measurable for all $\xi\in\R^n$,
\item $\xi\mapsto\A(x,\xi)$ is continuous for almost all
  $x\in\Omega$,
\item $\A(x,\xi)\cdot\xi\geq\alpha \abs{\xi}^{p}$, $\alpha>0$, for almost all  $x\in\Omega$ and all $\xi\in\R^n$,
\item $\abs{\A(x,\xi)}\leq\beta\abs{\xi}^{p-1}$ for almost all
  $x\in\Omega$ and all $\xi\in\R^n$,
\item $(\A(x,\xi)-\A(x,\eta))\cdot(\xi-\eta)>0$ for almost all
  $x\in\Omega$ and all $\xi, \eta\in \R^n$ with
  $\xi\not=\eta$.
\end{enumerate}
However, for simplicity, we only consider the prototype $p$-Laplace equation in this work.

\begin{definition}
A lower semicontinuous function $u: \Omega\to (-\infty,\infty]$ is $p$-super\-har\-mo\-nic,
if $u$ is finite in a dense subset of $\Omega$, and the following comparison principle holds in
every $\Omega'\Subset\Omega$:
If $h\in C(\overline{\Omega'})$ is a weak solution to \eqref{eq:p-laplace} in $\Omega'\Subset\Omega$
and $u \geq  h$ on $\partial \Omega'$, then $u\geq  h$ in $\Omega'$.
\end{definition}

Clearly, $\min(u,v)$ and $\alpha u+\beta$ are $p$-superharmonic if $u$ and $v$ are and
$\alpha,\beta\in\mathbb R$ with $\alpha\ge0$.
We refer to \cite{HKM} and \cite{LindqvistCrelle} for more on $p$-superharmonic functions.

There is a relation between weak supersolutions and
$p$-superharmonic functions.
Weak supersolutions satisfy the comparison principle
and, roughly speaking, they are $p$-superharmonic,
provided the issue about lower semicontinuity is properly handled:
every supersolution has a lower semicontinuous representative.
In particular, a lower semicontinuous supersolution is $p$-superharmonic.
However, not all $p$-superharmonic functions are weak supersolutions, as the example
\[
u(x)=
\begin{cases}
|x|^{(p-n)/(p-1)}&\quad\text{if}\quad p\ne n,\\
\log|x|&\quad\text{if}\quad p=n,
\end{cases}
\]
shows. This function is $p$-superharmonic in $\mathbb R^n$, but it fails to belong to $W^{1,p}_{\loc}(\mathbb R^n)$,
when $1<p<n$. However, the function $u$ is a weak solution in $\mathbb R^n\setminus\{0\}$ and it is a solution of
\[
-\Delta_p u=\delta,
\]
where the right-hand side is Dirac's delta at the origin.

Next we recall the obstacle problem in the calculus of variations.

\begin{definition}
For $\psi\in C(\Omega)\cap W^{1,p}(\Omega)$,
consider the class $\mathcal F_\psi$ of functions $v\in C(\Omega)\cap W^{1,p}(\Omega)$ such that $v-\psi\in W^{1,p}_0(\Omega)$
and $v\ge\psi$ in $\Omega$.
The problem is to find a function $u\in\mathcal F_\psi$ such that
\[
\int_{\Omega}\abs{\nabla u}^{p-2}\nabla u\cdot(\nabla v-\nabla u)\dif x\ge0
\]
for all $v\in\mathcal F_\psi$. The function $\psi$ is called the obstacle and $u$ the solution
to the obstacle problem in $\Omega$ with the obstacle $\psi$.
\end{definition}

By \cite{HKM}, there is a unique solution of the obstacle problem provided $\mathcal F_\psi$
is not empty, because $\mathcal F_\psi$ is convex.
Continuity follows from the standard regularity theory, see \cite{HKM}.

For every $p$-superharmonic function $u$ and $\Omega'\Subset\Omega$,
there is an increasing sequence $u_i$, $i=1,2,\dots$, of continuous weak supersolutions in $\Omega'$,
converging to $u$. Such a sequence is found by means of the obstacle
problem as follows: By semicontinuity, there is an increasing sequence
$\psi_i$, $i=1,2,\ldots$, of smooth functions converging to $u$
pointwise. Let $u_i$ be the solution to the obstacle problem with
obstacle $\psi_i$. Then it can be shown that
the sequence $(u_i)$ has the desired properties, see \cite{HKM} and \cite{LindqvistCrelle}.

An important consequence of this approximation is a local summability result for
$p$-superharmonic functions. This in turn implies that the solutions
to the obstacle problem converge in a Sobolev space, and the existence
of the Riesz measures of $p$-superharmonic functions. We record these
facts in the following theorem. The example above shows that the bounds
are sharp.

\begin{theorem}
  Let $u$ be a $p$-superharmonic function in $\Omega$ and let $u_i$, $i=1,2,\dots$, be a sequence
  of solutions of the obstacle problem as above. Then
  \begin{enumerate}
  \item $u\in L^{r}_{\loc}(\Omega)$ and $\abs{\nabla u}\in
    L^q_{\loc}(\Omega)$ for every $r$ and $q$ such that
    \begin{displaymath}
      1\le r<\frac{n}{n-p}(p-1) \quad\text{and}\quad 1\le q<\frac{n}{n-1}(p-1).
    \end{displaymath}
  \item There is a nonnegative Radon measure $\mu$ such that $-\Delta_p u=\mu$.
  \item The sequence $u_i$, $i=1,2,\dots$, converges to $u$ in $W^{1,q}_{\loc}(\Omega)$ as $i\to\infty$.
  \end{enumerate}
\end{theorem}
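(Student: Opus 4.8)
The plan is to exploit the monotone approximation $u_i \nearrow u$ by continuous supersolutions from the obstacle problem, prove the integrability bounds for the $u_i$ with constants independent of $i$, and then pass to the limit.

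First I would establish the a priori estimates. Fix $\Omega'' \Subset \Omega' \Subset \Omega$ and let $u_i$ be the obstacle-problem supersolutions on $\Omega'$; by the comparison principle and the construction we may assume $u_1 \geq 0$ on $\Omega'$ after subtracting a constant (or work with $u_i - u_1$), and the $u_i$ are uniformly bounded in $L^1_{\loc}$ because they converge monotonically to $u$, which is finite on a dense set and hence (by lower semicontinuity together with the weak Harnack inequality for supersolutions) locally integrable to some positive exponent. The key step is a Caccioppoli-type / truncation estimate: testing the supersolution inequality for $u_i$ with $\varphi = (\text{truncation of } u_i) \cdot \eta^p$ and using a standard iteration over truncation levels $\{k < u_i \leq k+1\}$ gives
\begin{displaymath}
  \int_{\Omega''} |\nabla u_i|^q \dif x \leq C\Big( \| u_i \|_{L^r(\Omega')} , q, n, p, \dist(\Omega'',\partial\Omega') \Big)
\end{displaymath}
for all $q < \frac{n}{n-1}(p-1)$, with $C$ independent of $i$; simultaneously one gets the bound on $\| u_i \|_{L^r(\Omega'')}$ for $r < \frac{n}{n-p}(p-1)$ from a Sobolev embedding bootstrap. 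This is the classical Kilpeläinen–Malý / Bénilan et al. argument, and it is the main obstacle: one must run the truncation iteration carefully so that all constants stay uniform in $i$, which is exactly what makes the limit function inherit the estimates.

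With the uniform bounds in hand, I would extract the limit. Since $(u_i)$ is bounded in $W^{1,q}_{\loc}(\Omega)$ for each admissible $q$, and $u_i \to u$ pointwise (monotonically), a diagonal argument gives $u \in L^r_{\loc}(\Omega)$ and $|\nabla u| \in L^q_{\loc}(\Omega)$ with $\nabla u_i \rightharpoonup \nabla u$ weakly in $L^q_{\loc}$; this proves (1). For (3), weak convergence must be upgraded to strong convergence of the gradients: here I would use the monotonicity $(\A(x,\nabla u_i) - \A(x,\nabla u_j))\cdot(\nabla u_i - \nabla u_j) \geq 0$, test the equation for $u_i$ with $\eta(u_i - u)_+$-type cutoffs, and apply the standard trick (as in Boccardo–Gallouët and Kilpeläinen–Malý) showing $\nabla u_i \to \nabla u$ in measure on compact subsets, then combine with the $L^q$ bound for $q$ slightly larger to get strong $L^{q'}$ convergence for the stated range. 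For (2), strong convergence of the gradients lets one pass to the limit in
\begin{displaymath}
  \int_{\Omega} \A(x,\nabla u_i)\cdot\nabla\varphi \dif x = \int_\Omega \varphi \dif \mu_i \geq 0, \qquad \varphi \in C_0^\infty(\Omega),\ \varphi \geq 0,
\end{displaymath}
so the functionals $\varphi \mapsto \int \A(x,\nabla u)\cdot\nabla\varphi$ are nonnegative on nonnegative test functions; by the Riesz representation theorem this defines a nonnegative Radon measure $\mu$ with $-\Delta_p u = \mu$ in the distributional sense. Finally I would note that the example $|x|^{(p-n)/(p-1)}$ (resp. $\log|x|$) from the preceding discussion, whose gradient lies in $L^q_{\loc}$ precisely for $q < \frac{n}{n-1}(p-1)$ and which solves $-\Delta_p u = \delta$, shows both exponent thresholds are sharp, so no improvement is possible.
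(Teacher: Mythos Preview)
The paper does not prove this theorem; it is \emph{stated} without proof, introduced with the sentence ``We record these facts in the following theorem,'' and the content is attributed to the references \cite{HKM}, \cite{LindqvistCrelle}, and \cite{KilpelainenMaly1}. So there is no argument in the paper to compare your proposal against.

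That said, your sketch is essentially the standard route found in those references: uniform truncation/Caccioppoli estimates on the obstacle-problem approximants (the Kilpel\"ainen--Mal\'y iteration) to get (1), a monotonicity/Boccardo--Gallou\"et argument to upgrade weak to strong gradient convergence for (3), and the Riesz representation theorem for (2). One small point worth tightening: your appeal to ``lower semicontinuity together with the weak Harnack inequality'' to get an initial $L^1_{\loc}$ bound on $u$ is not quite how the argument is usually started. The cleaner route is to observe that each truncation $\min(u,k)$ is a bounded $p$-superharmonic function and hence a genuine supersolution in $W^{1,p}_{\loc}$; one then proves the $L^q$ gradient bound for $\min(u,k)$ uniformly in $k$ via the level-set iteration, and the $L^r$ bound for $u$ itself follows by Sobolev embedding on the truncations and a telescoping sum, rather than from a Harnack inequality applied directly to $u$.
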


The equation $-\Delta_p u=\mu$ means that
\[
 \int_{\Omega}\abs{\nabla u}^{p-2}\nabla u\cdot\nabla \varphi\dif x=\int_\Omega\varphi\dif\mu
\]
for all $ \varphi\in C^\infty_0(\Omega)$. Here $\mu$ is the Riesz measure of $u$.
Observe, that for $p<n$, the gradient of a $p$-superharmonic function $u$ does not belong
to $L^p_{\loc}(\Omega)$, but however, it is a weak solution of the above measure data problem.
If, in addition, $u\in W^{1,p}_{\loc}(\Omega)$, then the restriction of $\mu$ belongs to $W^{-1,p'}(\Omega')$,
which is the dual of the space $W^{1,p}_0(\Omega')$, for every $\Omega'\Subset\Omega$.

For later use, we record the following lemma. The proof is based on
a characterization of Sobolev spaces and their duals by means of
Bessel potentials, and is given in Appendix \ref{ap:dual-approx}.

\begin{lemma}\label{lem:meas-approx}
  Let $\mu \in W^{-1,p'}(\Om)$ be a positive measure. Then
  there is a sequence $f_i$, $i=1,2,\dots$,  of positive smooth functions such that
$f_i\to \mu$ in $W^{-1,p'}(\Om)$ as $i\to \infty$.
\end{lemma}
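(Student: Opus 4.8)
The plan is to use the Bessel potential characterization of the Sobolev spaces $W^{1,p}_0(\Om)$ and $W^{-1,p'}(\Om)$ and then exploit the positivity-preserving nature of the Bessel kernel to upgrade an arbitrary smooth approximation into a positive one. First I would recall that, after extending by zero, elements of $W^{-1,p'}(\Om)$ may be viewed as elements of the negative-order Bessel potential space $\mathcal{L}^{-1,p'}(\R^n) = (I-\Delta)^{1/2} L^{p'}(\R^n)$; concretely, $\mu \in W^{-1,p'}(\Om)$ can be written as $\mu = (I-\Delta)^{1/2} g$ for some $g \in L^{p'}(\R^n)$, or equivalently $G_1 * \mu \in L^{p'}(\R^n)$, where $G_1$ is the Bessel kernel of order $1$. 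The key structural fact is that $G_1 \geq 0$, so convolution with $G_1$ maps positive measures to positive functions and, conversely, the map $h \mapsto (I-\Delta)^{1/2}(G_1 * h) = $ (formally) $h$ interacts well with positive mollifiers.

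The main steps I would carry out are: (i) fix a standard nonnegative mollifier $\rho_\varepsilon$ and set $f_\varepsilon = \rho_\varepsilon * \mu$; since $\mu$ is a positive measure, $f_\varepsilon$ is a nonnegative smooth function. (ii) Show $f_\varepsilon \to \mu$ in $\mathcal{L}^{-1,p'}(\R^n)$, hence in $W^{-1,p'}(\Om)$ after restriction: this reduces to showing $G_1 * f_\varepsilon = \rho_\varepsilon * (G_1 * \mu) \to G_1 * \mu$ in $L^{p'}(\R^n)$, which is the standard fact that mollification converges in $L^{p'}$, using that $G_1 * \mu \in L^{p'}$. (iii) The functions $f_\varepsilon$ need not be supported in $\Om$, so I would multiply by a cutoff: but a cutoff can destroy positivity only if it is itself negative, which it is not, so $\eta f_\varepsilon \geq 0$ for $0 \le \eta \le 1$; the delicate point is that multiplication by a cutoff is not obviously bounded on $W^{-1,p'}$ in a way compatible with the convergence. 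I would instead handle localization by first replacing $\mu$ with $\eta \mu$ for a fixed cutoff $\eta$ equal to $1$ on a slightly smaller domain — but since the statement only asks for convergence in $W^{-1,p'}(\Om)$ (a restriction), and $f_\varepsilon|_\Om$ already makes sense, one can simply take $f_i = f_{1/i}$ restricted to $\Om$, noting the restriction map $\mathcal{L}^{-1,p'}(\R^n) \to W^{-1,p'}(\Om)$ is continuous. Extracting a sequence $i \to \infty$ from $\varepsilon \to 0$ is immediate.

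The main obstacle I anticipate is step (ii): making precise the duality pairing and the identity $G_1 * (\rho_\varepsilon * \mu) = \rho_\varepsilon * (G_1 * \mu)$ when $\mu$ is only a measure (not a function), and verifying that $G_1 * \mu \in L^{p'}(\R^n)$ genuinely follows from $\mu \in W^{-1,p'}(\Om)$. This requires the precise definition of the norm on $W^{-1,p'}$ via the pairing with $W^{1,p}_0$, the identification of that dual with Bessel potentials (Calderón's theorem), and care that all convolutions are justified by Fubini and Young's inequality using $G_1 \in L^1(\R^n)$ together with the polynomial decay and local integrability of $G_1$. The positivity claims, once the functional-analytic framework is fixed, are essentially automatic from $\rho_\varepsilon \geq 0$ and $\mu \geq 0$.
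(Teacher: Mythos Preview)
Your proposal is correct and follows essentially the same route as the paper: mollify $\mu$ directly, use the Bessel potential identification $W^{-1,p'}\simeq\mathcal{L}^{-1,p'}$ to write $\mu=T_{-1}g$ with $g\in L^{p'}(\R^n)$, commute the mollifier past $T_{-1}$ (equivalently, past $G_1\ast$), and reduce to $\rho_\varepsilon\ast g\to g$ in $L^{p'}$; positivity of $f_\varepsilon$ is immediate from $\rho_\varepsilon\ge 0$ and $\mu\ge 0$. The only point where the paper is slightly cleaner is the passage from $\Omega$ to $\R^n$: rather than ``extending by zero'' the element of $W^{-1,p'}(\Omega)$ (which is not a priori meaningful for distributions), the paper extends $\mu$ as a bounded linear functional via Hahn--Banach after extending test functions by zero, so that $\mu\in W^{-1,p'}(\R^n)$ is guaranteed from the start; this sidesteps the verification you flag in your ``main obstacle'' paragraph.
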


Now we are ready to prove an approximation lemma for weak supersolutions.

\begin{lemma}\label{lem:obstacle-approx}
  Let $u$ be a continuous weak supersolution in $\Omega$, $\Omega'\Subset\Omega$ an open set
  and let $\mu$ be the Riesz measure of $u$.
  Then there is a sequence $u_i$, $i=1,2,\dots$,  of solutions to $-\Delta_p u_i=f_i$,
  where the functions $f_i$ are nonnegative and smooth with the
  following properties:
  \begin{enumerate}
  \item $u_i\to u$ in $W^{1,p}(\Omega')$, and pointwise almost everywhere in $\Omega'$,
  \item $\nabla u_i\to\nabla u$ pointwise almost everywhere in $\Omega'$ and
  \item  $f_i\to\mu$ in $W^{-1,p'}(\Omega')$ as $i\to\infty$.
 \end{enumerate}
 \end{lemma}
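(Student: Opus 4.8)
The plan is to realize each $u_i$ as the solution of a Dirichlet problem on $\Omega'$ with a smooth right-hand side coming from Lemma~\ref{lem:meas-approx}. First I would note that, $u$ being a continuous weak supersolution in $\Omega$, it lies in $W^{1,p}_{\loc}(\Omega)$, hence $u\in W^{1,p}(\Omega')$ because $\ol{\Omega'}$ is a compact subset of $\Omega$. The Riesz measure $\mu$ satisfies
\[
\int_{\Omega'}\abs{\nabla u}^{p-2}\nabla u\cdot\nabla\varphi\dif x=\int_{\Omega'}\varphi\dif\mu,\qquad \varphi\in C^\infty_0(\Omega'),
\]
and since $\abs{\nabla u}^{p-1}\in L^{p'}(\Omega')$ the left-hand side is bounded by a constant times $\norm{\nabla\varphi}_{L^p(\Omega')}$. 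Thus the restriction of $\mu$ to $\Omega'$ extends to an element of $W^{-1,p'}(\Omega')$, and by density this identity holds for every $\varphi\in W^{1,p}_0(\Omega')$. Applying Lemma~\ref{lem:meas-approx} on $\Omega'$ produces positive smooth functions $f_i$ with $f_i\to\mu$ in $W^{-1,p'}(\Omega')$, which is already property (3).

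Next I would let $u_i$ be the unique weak solution of $-\Delta_pu_i=f_i$ in $\Omega'$ with $u_i-u\in W^{1,p}_0(\Omega')$. Such a solution exists and is unique by the standard theory (equivalently, by minimizing the strictly convex functional $v\mapsto\frac1p\int_{\Omega'}\abs{\nabla v}^p\dif x-\inprod{f_i,v}$ over the affine class $u+W^{1,p}_0(\Omega')$, using that $f_i\in W^{-1,p'}(\Omega')$). By construction $u_i$ solves the required measure-data equation with the nonnegative smooth datum $f_i$; by elliptic regularity it is in fact continuous, although this is not needed.

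For the convergences in (1) and (2), the key computation is to subtract the weak formulations of $-\Delta_pu_i=f_i$ and $-\Delta_pu=\mu$ and test with $u_i-u\in W^{1,p}_0(\Omega')$, obtaining
\[
\int_{\Omega'}\bigl(\abs{\nabla u_i}^{p-2}\nabla u_i-\abs{\nabla u}^{p-2}\nabla u\bigr)\cdot\nabla(u_i-u)\dif x=\inprod{f_i-\mu,\,u_i-u}.
\]
For $p\ge2$ the elementary inequality $\bigl(\abs{\xi}^{p-2}\xi-\abs{\eta}^{p-2}\eta\bigr)\cdot(\xi-\eta)\ge c\,\abs{\xi-\eta}^p$ bounds the left-hand side below by $c\norm{\nabla(u_i-u)}_{L^p(\Omega')}^p$, while the right-hand side is at most $\norm{f_i-\mu}_{W^{-1,p'}(\Omega')}\norm{u_i-u}_{W^{1,p}_0(\Omega')}$, which by the Poincar\'e inequality is comparable to $\norm{f_i-\mu}_{W^{-1,p'}(\Omega')}\norm{\nabla(u_i-u)}_{L^p(\Omega')}$. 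Combining these gives $\norm{\nabla(u_i-u)}_{L^p(\Omega')}^{p-1}\le C\norm{f_i-\mu}_{W^{-1,p'}(\Omega')}\to0$, and a further application of Poincar\'e yields $u_i\to u$ in $W^{1,p}(\Omega')$. Passing to a subsequence then gives $u_i\to u$ and $\nabla u_i\to\nabla u$ almost everywhere in $\Omega'$, which is (1) and (2).

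The step I expect to be most delicate is the very first one, namely verifying that the Riesz measure of $u$, restricted to $\Omega'$, genuinely lies in the dual space $W^{-1,p'}(\Omega')$: this is needed both so that Lemma~\ref{lem:meas-approx} applies and so that $\mu$ may legitimately be paired with $u_i-u\in W^{1,p}_0(\Omega')$ in the energy estimate. This is exactly the place where the hypothesis that $u$ is a (continuous) weak supersolution, rather than a general $p$-superharmonic function, is essential; the remaining ingredients are the routine solvability of the $p$-Laplace Dirichlet problem and the standard monotonicity estimate for $p\ge2$.
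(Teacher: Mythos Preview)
Your proof is correct and follows essentially the same route as the paper's own argument: verify $\mu\in W^{-1,p'}(\Omega')$ from the equation and H\"older's inequality, invoke Lemma~\ref{lem:meas-approx} to get the smooth nonnegative $f_i$, solve the Dirichlet problems $-\Delta_p u_i=f_i$ with boundary datum $u$, and then subtract and test with $u-u_i$ to obtain the $W^{1,p}$-convergence via the $p\ge2$ monotonicity inequality and Poincar\'e. The only cosmetic difference is that the paper calls the last step the ``Sobolev inequality'' where you (equivalently) say Poincar\'e, and the paper leaves the passage to a subsequence for almost everywhere convergence implicit.
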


\begin{proof}
  Since $\mu$ is the Riesz measure of $u$, we have
  \begin{displaymath}
\begin{split}
|\inprod{\mu,\varphi}|
&=\left|\int_{\Omega'}\varphi\dif \mu\right|
=\left|\int_{\Omega'} \abs{\nabla u}^{p-2}\nabla u\cdot \nabla \varphi\dif x\right|\\
&\le\|\nabla u\|_{L^p(\Omega')}^{p-1}\|\nabla\varphi\|_{L^p(\Omega')}
\end{split}
  \end{displaymath}
for every $\varphi\in C_0^\infty(\Omega')$.
Because $u\in W^{1,p}(\Omega')$, we conclude that $\mu\in W^{-1,p'}(\Omega')$.
By Lemma~\ref{lem:meas-approx}, choose  a
  sequence $(f_i)$ of smooth, nonnegative functions converging to
  $\mu$ in $W^{-1,p'}(\Omega')$, and let $u_i$, $i=1,2,\dots$, be the unique solutions of the boundary value problems
  \begin{displaymath}
    \begin{cases}
      -\Delta_p u_i=f_i \quad \text{in}\quad \Omega',\\
      u_i-u\in W_0^{1,p}(\Omega').
    \end{cases}
  \end{displaymath}
Using $u-u_i$ as a test function in the equations for $u$ and $u_i$ and
subtracting the obtained equations, we have
\[
  \begin{split}
  \int_{\Omega'}(\abs{\nabla u}^{p-2}\nabla u-\abs{\nabla u_i}^{p-2}\nabla u_i)\cdot
    \nabla (u-u_i)\dif x=\inprod{\mu-f_i,u-u_i},
  \end{split}
 \]
 where $\inprod{\cdot, \cdot}$ denotes the duality pairing between
  $W^{1,p}_0(\Omega')$ and $W^{-1,p'}(\Omega')$. The left-hand side is positive,
  so we may take absolute values on the right-hand side.
 By an elementary inequality for vectors and the Sobolev inequality, we obtain
\begin{equation}
\label{eq:startup-estimate}
  \begin{split}
  \int_{\Omega'}&\abs{\nabla (u-u_i)}^p\dif x\\
  &\le  \int_{\Omega'}(\abs{\nabla u}^{p-2}\nabla u-\abs{\nabla u_i}^{p-2}\nabla u_i)\cdot
    \nabla (u-u_i)\dif x\\
   &=|\inprod{\mu-f_i,u-u_i}|
    \leq\norm{\mu-f_i}_{W^{-1,p'}(\Omega')}\norm{u-u_i}_{W^{1,p}_0(\Omega')}\\
    &\leq c\norm{\mu-f_i}_{W^{-1,p'}(\Omega')}\norm{\nabla(u-u_i)}_{L^p(\Omega')},
  \end{split}
\end{equation}
where $c=c(n,p)$.
  From this, it follows that
  \begin{displaymath}
    \norm{\nabla (u-u_i)}_{L^p(\Omega')}^{p-1}
    \leq c\norm{\mu-f_i}_{W^{-1,p'}(\Omega')}.
  \end{displaymath}
 This implies that $u_i\to u$ in $W^{1,p}(\Omega')$ as $i\to\infty$.
  \end{proof}

As a consequence of the previous result, we obtain a corresponding approximation
result for $p$-superharmonic functions.

\begin{theorem} \label{thm:p-superapprox}
  Let $u$ be a $p$-superharmonic function in $\Omega$, $\Omega'\Subset\Omega$ an open set
  and let $\mu$ be the Riesz measure of $u$.
  Then there is a sequence $u_i$, $i=1,2,\dots$,  of solutions to $-\Delta_p u_i=f_i$ in $\Omega'$,
  where the functions $f_i$ are nonnegative and smooth with the
  following properties:
  \begin{enumerate}
  \item $u_i\to u$ in $W^{1,q}(\Omega')$ with $1\le q<n(p-1)/(n-1)$,
    and pointwise almost everywhere in $ \Omega' $,
  \item  $\nabla u_i\to\nabla u$  pointwise  almost everywhere in $\Omega'$ and
  \item  the functions $f_i$ converge weakly to $\mu$ in the sense of measures as $i\to\infty$.
  \end{enumerate}
\end{theorem}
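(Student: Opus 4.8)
The plan is to approximate $u$ from below by continuous weak supersolutions, to apply Lemma~\ref{lem:obstacle-approx} to each of these, and to finish with a diagonal argument. We assume $2<p\le n$ in what follows, so that every exponent $q$ with $1\le q<n(p-1)/(n-1)$ satisfies $q<p$; the case $p>n$ is analogous and easier, since then the summability estimate above gives $u\in W^{1,p}_{\loc}(\Omega)$, so that $u$ is itself a continuous weak supersolution and Lemma~\ref{lem:obstacle-approx} applies to $u$ directly.

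Fix domains $\Omega'\Subset\widetilde\Omega\Subset\Omega$. Applying the obstacle construction recalled above with $\widetilde\Omega$ in place of $\Omega'$, we obtain an increasing sequence $(v_k)$ of continuous weak supersolutions in $\widetilde\Omega$ with $v_k\to u$ pointwise, and by the summability theorem above $v_k\to u$ in $W^{1,q}_{\loc}(\widetilde\Omega)$, hence in $W^{1,q}(\Omega')$, for every $1\le q<n(p-1)/(n-1)$. Fixing one such $q$ with $q>p-1$, the sequence $\abs{\nabla v_k}^{p-1}$ is bounded in $L^{q/(p-1)}(\Omega')$, hence equi-integrable on compact subsets; combined with $\nabla v_k\to\nabla u$ in $L^q(\Omega')$ and the continuity of $\xi\mapsto\abs{\xi}^{p-2}\xi$ this yields $\abs{\nabla v_k}^{p-2}\nabla v_k\to\abs{\nabla u}^{p-2}\nabla u$ in $L^1_{\loc}(\Omega')$, so the Riesz measures $\mu_k$ of $v_k$ satisfy $\int_{\Omega'}\varphi\dif\mu_k\to\int_{\Omega'}\varphi\dif\mu$ for all $\varphi\in C^\infty_0(\Omega')$. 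Finally, since $\nabla v_k\to\nabla u$ in $L^q(\Omega')$ and $\Omega'$ is bounded, we may pass to a subsequence along which $\nabla v_k\to\nabla u$ almost everywhere; it is still increasing, so all of the above persists.

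For each $k$, Lemma~\ref{lem:obstacle-approx} applied to the continuous supersolution $v_k$ in $\widetilde\Omega$, with open subset $\Omega'$, produces solutions $v_k^{(i)}$ of $-\Delta_p v_k^{(i)}=f_k^{(i)}$ with $f_k^{(i)}\ge0$ smooth such that, as $i\to\infty$, $v_k^{(i)}\to v_k$ in $W^{1,p}(\Omega')$ and pointwise almost everywhere, $\nabla v_k^{(i)}\to\nabla v_k$ almost everywhere, and $f_k^{(i)}\to\mu_k$ in $W^{-1,p'}(\Omega')$. Since convergence in $W^{1,p}(\Omega')$ and convergence almost everywhere on the bounded set $\Omega'$ both imply convergence in measure, I would choose inductively indices $i(1)<i(2)<\cdots$ so that, writing $u_k:=v_k^{(i(k))}$ and $f_k:=f_k^{(i(k))}$, one has $\norm{u_k-v_k}_{W^{1,p}(\Omega')}<1/k$, the sets $\{x\in\Omega':\abs{\nabla u_k(x)-\nabla v_k(x)}>1/k\}$ and $\{x\in\Omega':\abs{u_k(x)-v_k(x)}>1/k\}$ have measure less than $2^{-k}$, and $\norm{f_k-\mu_k}_{W^{-1,p'}(\Omega')}<1/k$. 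Each $u_k$ is then a solution of $-\Delta_p u_k=f_k$ with $f_k$ nonnegative and smooth.

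It remains to check (1)--(3). For (1), let $1\le q<n(p-1)/(n-1)$, so $q<p$; then by Hölder's inequality on the bounded set $\Omega'$, $\norm{u_k-u}_{W^{1,q}(\Omega')}\le c\norm{u_k-v_k}_{W^{1,p}(\Omega')}+\norm{v_k-u}_{W^{1,q}(\Omega')}\to0$ by the selection and the choice of $(v_k)$, and the measure estimates together with the Borel--Cantelli lemma give $u_k-v_k\to0$ almost everywhere, hence $u_k\to u$ almost everywhere. Conclusion (2) is the delicate point, since in Lemma~\ref{lem:obstacle-approx} the convergence of the gradients holds only almost everywhere and would be destroyed by a careless diagonalization; however, the quantitative bound on $\{\abs{\nabla u_k-\nabla v_k}>1/k\}$ with Borel--Cantelli gives $\nabla u_k-\nabla v_k\to0$ almost everywhere, which together with $\nabla v_k\to\nabla u$ almost everywhere yields $\nabla u_k\to\nabla u$ almost everywhere. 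For (3), if $\varphi\in C^\infty_0(\Omega')$ then $\int_{\Omega'}\varphi\,f_k\dif x=\inprod{f_k,\varphi}$ and $\abs{\inprod{f_k-\mu_k,\varphi}}\le\norm{f_k-\mu_k}_{W^{-1,p'}(\Omega')}\norm{\varphi}_{W^{1,p}_0(\Omega')}\to0$, while $\inprod{\mu_k,\varphi}=\int_{\Omega'}\varphi\dif\mu_k\to\int_{\Omega'}\varphi\dif\mu$; hence $\int_{\Omega'}\varphi\,f_k\dif x\to\int_{\Omega'}\varphi\dif\mu$ for every $\varphi\in C^\infty_0(\Omega')$. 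Taking, for $K\Subset\Omega'$, a function $\eta\in C^\infty_0(\Omega')$ with $0\le\eta\le1$ and $\eta\equiv1$ on $K$, we get $f_k(K)\le\int_{\Omega'}\eta\,f_k\dif x\to\int_{\Omega'}\eta\dif\mu<\infty$, so that $\sup_k f_k(K)<\infty$; by approximating an arbitrary $\varphi\in C_c(\Omega')$ uniformly by functions in $C^\infty_0(\Omega')$ supported in a fixed compact set, this uniform mass bound upgrades the convergence to all $\varphi\in C_c(\Omega')$, that is, $f_k\to\mu$ weakly in the sense of measures. The essential obstacle---and the reason for passing to a subsequence of $(v_k)$ and for the quantitative choice of the indices $i(k)$---is the preservation of the a.e.\ convergence of the gradients through the diagonal extraction.
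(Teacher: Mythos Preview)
Your proof is correct and follows the same overall structure as the paper's: obstacle-problem approximations $v_k\to u$, then Lemma~\ref{lem:obstacle-approx} applied to each $v_k$, then a diagonal choice. The main difference is in how you secure the a.e.\ gradient convergence~(2). You build a Borel--Cantelli mechanism into the diagonal selection, but this is unnecessary: once $\norm{u_k-v_k}_{W^{1,p}(\Omega')}\le 1/k$ and $v_k\to u$ in $W^{1,q}(\Omega')$, the triangle inequality already gives $u_k\to u$ in $W^{1,q}(\Omega')$, and a single passage to a subsequence then yields both $u_k\to u$ and $\nabla u_k\to\nabla u$ a.e.\ --- this is exactly what the paper does. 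So your worry that the a.e.\ convergence from Lemma~\ref{lem:obstacle-approx} ``would be destroyed by a careless diagonalization'' is moot: the a.e.\ convergence is recovered at the end from the $L^q$-convergence of the diagonal sequence itself, not carried through from the individual approximations. For~(3) you route through the intermediate Riesz measures $\mu_k$ (via $\abs{\nabla v_k}^{p-2}\nabla v_k\to\abs{\nabla u}^{p-2}\nabla u$ in $L^1_{\loc}$), whereas the paper uses the a.e.\ convergence $\nabla u_k\to\nabla u$ together with the $L^r$-bound on $\abs{\nabla u_k}^{p-2}\nabla u_k$ (for some $r>1$) to identify the weak limit directly; both arguments are short and valid, and yours has the minor virtue of making the upgrade from $C^\infty_0(\Omega')$ to $C_c(\Omega')$ explicit.
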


\begin{proof}
Let $\widetilde{u}_i$, $i=1,2,\dots$, be continuous solutions to the obstacle problem converging to $u$ in $W^{1,q}(\Omega')$.
For each $i=1,2,\ldots$, we may apply Lemma \ref{lem:obstacle-approx} to find a solution of $-\Delta_p u_i=f_i$
in $\Omega'$ such that
\[
\norm{u_i-\widetilde{u}_i}_{W^{1,p}(\Omega')} \leq\frac1i.
\]

  Since
  \begin{displaymath}
    \norm{u-u_i}_{W^{1,q}(\Omega')}\leq
    \norm{u-\widetilde{u}_i}_{W^{1,q}(\Omega')}
    +\norm{\widetilde{u}_i -u_i}_{W^{1,q}(\Omega')},
  \end{displaymath}
  we see that $(u_i)$ converges to $u$ in $W^{1,q}(\Omega')$ as $i\to\infty$.
  By passing to a subsequence, if necessary, we may assume that $\nabla u_i\to \nabla u$ pointwise
  almost everywhere in $\Omega'$. Recall that $q<n(p-1)/(n-1)$; thus the sequence
  $(\abs{\nabla u_i}^{p-2}\nabla u_i)$ is bounded in $L^r(\Omega')$ for
  $1<r<n/(n-1)$. In addition, the limit of the weakly convergent
  subsequence is $\abs{\nabla u}^{p-2}\nabla u$
  by the pointwise convergence.  This allows us to conclude that
  \begin{align*}
    \lim_{i\to \infty}\int_{\Omega'}\varphi f_i\dif x=&\lim_{i\to \infty}
    \int_{\Omega'}\abs{\nabla u_i}^{p-2}\nabla
    u_i\cdot\nabla \varphi\dif x\\
    =&\int_{\Omega'} \abs{\nabla u}^{p-2}\nabla
    u\cdot\nabla \varphi\dif x\\
    =&\int_{\Omega'}\varphi \dif \mu
  \end{align*}
 for every $\varphi\in C^\infty_0(\Omega')$, and the proof is complete.
\end{proof}

\section{The $p$-parabolic equation}

In order to discuss the parabolic case, some preparations are needed.
Let $\Omega$ be an open and bounded set in $\R^n$ with $n\geq 1$.
We denote
\[
\Omega_T=\Omega\times(0,T)
,\] where $0<T<\infty$.
For an open set $U$ in $\R^n$ we write
\[
\Omega_{t_1,t_2}=\Omega\times(t_1,t_2),
\]
 where $0<t_1<t_2<\infty$.
The parabolic boundary of $\Omega_{t_1,t_2}$ is
\begin{displaymath}
  \partial_p\Omega_{t_1,t_2}=\big(\partial \Omega\times[t_1,t_2]\big)\cup
  (\overline{\Omega}\times\{t_1\}).
\end{displaymath}

The parabolic Sobolev space $L^p(0,T;W^{1,p}(\Omega))$ consists of
measurable functions $u:\Omega_T\to[-\infty,\infty]$ such that for
almost every $t\in (0,T)$, the function $x\mapsto u(x,t)$ belongs to
$W^{1,p}(\Omega)$ with the norm
\begin{equation}\label{eqn:parabsobo}
\|u\|_{L^p(0,T;W^{1,p}(\Omega))}
=\left(\int_{\Omega_T}(|u|^p+|\nabla u|^p)\dif x\dif t\right)^{1/p}<\infty.
\end{equation}
A function $u\in L^p(0,T;W^{1,p}(\Omega))$ belongs to the space
$L^{p}(0,T;W^{1,p}_0(\Omega))$ if $x\mapsto u(x,t)$ belongs to
$W^{1,p}_0(\Omega)$ for almost every $t\in (0,T)$.  The local space
$L^p_{\loc}(0,T;W^{1,p}_{\loc}(\Omega))$ consists of functions that
belong to the parabolic Sobolev space in every
$\Omega'_{t_1,t_2}\Subset\Omega_T$.

\begin{definition}
A function $u\in L^p_{\loc}(0,T;W^{1,p}_{\loc}(\Omega))$ is a weak solution
of \eqref{eq:p-parabolic} in $\Omega_T$, if
  \begin{equation}\label{eq:weak-form}
    -\int_{\Omega_T}u\frac{\partial \varphi}{\partial t}\dif x\dif t
    +\int_{\Omega_T}\abs{\nabla u}^{p-2}\nabla u\cdot\nabla \varphi\dif x\dif t=0
  \end{equation}
  for all test functions $\varphi\in C^\infty_0(\Omega_T)$.  The
  function $u$ is a supersolution if the integral in
  \eqref{eq:weak-form} is nonnegative for nonnegative test functions.
  In a general open set $V$ of $\R^{n+1}$, the above notions are to be
  understood in a local sense, i.e. $u$ is a solution if it is a
  solution in every set $\Omega_{t_2,t_2}\Subset V$.
\end{definition}

By parabolic regularity theory, the solutions
are locally H\"older continuous after a possible redefinition on a set of measure
zero, see \cite{DiB}.
In general, the time derivative $u_t$ does not exist as a function.
This is a principal, well-recognized difficulty with the definition.
Namely, in proving estimates, we usually need a test function
$\varphi$ that depends on the solution itself, for example
$\varphi=u\zeta$ where $\zeta$ is a smooth cutoff function.
Then we cannot avoid that the forbidden quantity
$u_t$ shows up in the calculation of $\varphi_t$.
In most cases, we can easily overcome this difficulty by using
an equivalent definition in terms of Steklov averages, as on pages 18 and 25 of \cite{DiB}.
Alternatively, we can proceed using convolutions with smooth
mollifiers.

\begin{definition}
A lower semicontinuous function $u:{\Omega_T}\to(-\infty,\infty]$ is
$p$-super\-para\-bolic in ${\Omega_T}$, if
 $u$ is finite in a dense subset of $\Omega_T$, and the following parabolic comparison principle holds:
If $h\in C(\overline{\Omega'}_{t_1,t_2})$ is a solution of \eqref{eq:p-parabolic} in $\Omega'_{t_1,t_2}\Subset{\Omega_T}$
and $h\leq u$ on the parabolic boundary $\partial_p\Omega'_{t_1,t_2}$, then $h\leq u$ in $\Omega_{t_1,t_2}$.
\end{definition}

It follows immediately from the definition that, if $u$ and
$v$ are $p$-superparabolic functions, so are
their pointwise minimum $\min(u,v)$ and
$u+\beta$, $\beta\in\mathbb R$.
Observe that $u+v$ and
$\alpha u$, with $\alpha\ge0$, are not superparabolic in general.
In addition, the class of superparabolic functions is closed
with respect to to the increasing convergence, provided the limit
function is finite in a dense subset.
We refer to \cite{KL} for more information about $p$-superparabolic functions.

A lower semicontinuous representative of a weak supersolution is $p$-superparabolic, see \cite{Ku}, but
as in the elliptic case, not all $p$-super\-para\-bo\-lic functions are weak supersolutions.
For example, consider the  Barenblatt solution $\mathcal B_p:\mathbb R^{n+1}\to[0,\infty)$,
\[
\mathcal B_p(x,t)
=\begin{cases}
t^{-n/\lambda}
\bigg(c-\dfrac{p-2}p\lambda^{1/(1-p)}
\bigg(\dfrac{|x|}{t^{1/\lambda}}\bigg)^{p/(p-1)}\bigg)_+^{(p-1)/(p-2)},&
\,t>0,
\\
0,&\, t\le 0,
\end{cases}
\]
where $\lambda=n(p-2)+p$, $p>2$, and the constant $c$ is
usually chosen so that
\[
\int_{\mathbb R^n}\mathcal B_p(x,t)\dif x=1
\]
for every $t>0$.
The Barenblatt solution is a weak solution of \eqref{eq:p-parabolic} in the upper half space
\[
\{(x,t)\in\mathbb R^{n+1}:x\in\mathbb R^n,\,t>0\}
\]
and it is a very weak solution of the equation
\[
\partial_t\mathcal B_p
-\Delta_p\mathcal B_p=\delta
\]
in $\mathbb R^{n+1}$, where the right-hand side is Dirac's delta at the origin.
In contrast with the heat kernel, which is strictly positive,
the Barenblatt solution has a bounded support
at a given instance $t>0$.
Hence the disturbances propagate with finite speed when $p>2$.
The Barenblatt solution is $p$-superparabolic in $\mathbb R^n$, but it is not a supersolution in an open set that contains the origin,
because it does not belong to $L^p_{\loc}(0,T;W^{1,p}_{\loc}(\Omega))$.

The obstacle problem in the calculus of variations is a basic tool in the study of the
$p$-superparabolic functions as in the elliptic case.

\begin{definition}
Let $\psi\in C^\infty(\mathbb R^{n+1})$ and consider the class $\mathcal F_\psi$ of all functions
$v\in C(\overline \Omega_T)$ such that $v\in L^p(0,T;W^{1,p}(\Omega))$, $v=\psi$ on the parabolic boundary
of $\Omega_T$ and $v\ge\psi$ in $\Omega_T$.
The problem is to find $u\in\mathcal F_\psi$ such that
\[
\begin{split}
\int_0^T\int_\Omega&\Big(|\nabla u|^{p-2}\nabla u\cdot(\nabla v-\nabla u)+(v-u)\frac{\partial v}{\partial t}\Big)\dif x\dif t
\\
&\ge\frac12\int_\Omega |v(x,T)-u(x,T)|^2\dif x
\end{split}
\]
for all smooth functions $v$ in the class $\mathcal F_\psi$.
In particular, $u$ is a continuous supersolution of \eqref{eq:p-parabolic}.
Moreover, in the open set $\{u>\psi\}$ the function $u$ is a solution of \eqref{eq:p-parabolic}.
\end{definition}

For existence results for smooth sets $\Omega$, see \cite{AL}, \cite{BDMObstacle} and \cite{KKS}.
Characterizations of the solutions of the parabolic obstacle problem have also been studied in \cite{LiPa}.

As before, we may construct continuous supersolutions converging to a
given $p$-superparabolic function in a space-time cylinder
$\Omega_{t_1,t_2}$ by employing the obstacle problem. For details, see
\cite{KinnunenLindqvist2} and \cite{KKP}. Below, we will also need the fact that the
solutions to the obstacle problem have a time derivative in $L^{p'}(0,T; W^{-1,p'}(\Omega))$,
which is the dual space of $L^p(t_1,t_2;W_0^{1,p}(\Omega))$. The latter fact is contained in the
existence result proved in \cite{BDMObstacle}. Here we use the fact
that the obstacle functions are smooth. The following summability estimates are
proved in \cite{KinnunenLindqvist1}.
See also \cite{BG} and \cite{BDGO}.

\begin{theorem}
  Let $u$ be a $p$-superparabolic function in $\Omega_T$,
    $\Omega'_{t_1,t_2}\Subset\Omega_T$ be an open set and let $u_i$, $i=1,2,\dots$,
  be a sequence of solutions of the obstacle problem in $\Omega'_{t_1,t_2}$ as above.
  Then
  \begin{enumerate}
  \item $u\in L^{r}_{\loc}(\Omega_T)$ and $\abs{\nabla u}\in
    L^q_{\loc}(\Omega_T)$ for any $r$ and $q$ such that
    \begin{displaymath}
    1\le  r<p-1+p/n \quad\text{and}\quad 1\le q<p-1+1/(n+1),
    \end{displaymath}
  \item there is a positive measure $\mu$ such that
    \begin{displaymath}
      \partial _t u-\Delta_p u=\mu
    \end{displaymath}
    in $\Omega_T$ and
  \item the sequence $u_i$, $i=1,2,\dots$, converges in $L^q(t_1,t_2;W^{1,q}(\Omega'))$ as $i\to\infty$.
  \end{enumerate}
\end{theorem}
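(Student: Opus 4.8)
The plan is to mirror the elliptic argument (obstacle approximation plus uniform integrability bounds), carrying the parabolic scaling through the estimates. First I would fix $\Omega'_{t_1,t_2}\Subset\Omega_T$ and recall that, by lower semicontinuity, there is an increasing sequence of smooth obstacles $\psi_i$ converging to $u$ pointwise in $\Omega'_{t_1,t_2}$; if $u_i$ denotes the solution of the obstacle problem with obstacle $\psi_i$, then the $u_i$ are continuous supersolutions with $u_i\uparrow u$. Each $u_i$ solves $\partial_t u_i-\Delta_p u_i=\mu_i$ with $\mu_i\ge0$ supported on the contact set $\{u_i=\psi_i\}$; since $\psi_i$ is smooth, $\mu_i$ is in fact absolutely continuous and $\partial_t u_i\in L^{p'}(t_1,t_2;W^{-1,p'}(\Omega'))$, which legitimizes the test-function computations below (alternatively one works throughout with Steklov averages).

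The technical heart is to prove the bounds of item (1) \emph{uniformly in $i$}. Here one first shows that $\sup_i\mu_i(K)<\infty$ for every compact subcylinder $K$, by testing the equation against a fixed cutoff and using $u_1\le u_i\le u$ together with the known local $L^1$-integrability of superparabolic functions. Then, testing the equation for $u_i$ against $T_k(u_i)\zeta^p$, with $T_k$ the truncation at level $k$ and $\zeta$ a space-time cutoff, yields a Caccioppoli inequality on the level sets $\{|u_i|<k\}$; combining this with the parabolic Sobolev embedding (the Gagliardo--Nirenberg interpolation of $L^\infty$ in time--$L^2$ in space with $L^p(W^{1,p})$) and summing a Boccardo--Gallou\"et-type geometric series over the dyadic levels produces $\|u_i\|_{L^r}\le C$ and $\|\nabla u_i\|_{L^q}\le C$ on compact subsets for all $r<p-1+p/n$ and $q<p-1+1/(n+1)$, with $C$ independent of $i$. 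Obtaining the \emph{sharp} exponents is the main obstacle, and it is precisely where the anisotropic parabolic geometry enters; this is the content of \cite{KinnunenLindqvist1}.

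Granting these uniform bounds, item (2) follows by weak compactness. Pass to a subsequence so that $\mu_i\rightharpoonup\mu$ weakly as measures (the masses being locally bounded), $u_i\to u$ in $L^q(t_1,t_2;W^{1,q}(\Omega'))$, and $\nabla u_i\to\nabla u$ a.e. Since item (1) allows $q>p-1$, the vector fields $|\nabla u_i|^{p-2}\nabla u_i$ are bounded in some $L^s$ with $s>1$, so by the a.e.\ convergence of gradients they converge weakly in $L^s$ to $|\nabla u|^{p-2}\nabla u$; passing to the limit in the weak formulation of $\partial_t u_i-\Delta_p u_i=\mu_i$ against $\varphi\in C_0^\infty(\Omega_T)$ then gives $\partial_t u-\Delta_p u=\mu$ with $\mu$ positive. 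Because $u_i\uparrow u$, the limit does not depend on the subsequence, so in fact the whole sequence converges and $\mu$ is intrinsic to $u$.

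For item (3) one upgrades a.e.\ convergence of gradients to convergence in $L^q(t_1,t_2;W^{1,q}(\Omega'))$. The usual device is to test the equation for $u_i-u_j$ with $T_k(u_i-u_j)\zeta^p$ and exploit the monotonicity of $\xi\mapsto|\xi|^{p-2}\xi$ to bound $\int_{\{|u_i-u_j|<k\}}|\nabla(u_i-u_j)|^p\zeta^p$ by a quantity of order $k$ times the bounded masses; on the complementary set $\{|u_i-u_j|\ge k\}$ one controls $\nabla(u_i-u_j)$ in $L^q$ using the uniform $L^q$ bound of item (1) and the smallness of the measure of that set. Balancing the two contributions in $k$ produces a Cauchy estimate in $L^q(t_1,t_2;W^{1,q}(\Omega'))$, hence convergence. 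I expect that, apart from the sharp-exponent bookkeeping of the second step, the only recurring subtlety is the systematic use of Steklov averages to make the time integrations legitimate.
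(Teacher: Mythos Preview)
The paper does not actually prove this theorem; it is quoted as a known result from \cite{KinnunenLindqvist1} (with auxiliary references to \cite{BG} and \cite{BDGO}), so there is no in-paper argument to compare against. Your outline is, in broad strokes, the strategy carried out in those references: obstacle approximation from below, uniform Caccioppoli-type estimates on sublevel sets obtained by testing with truncations, the parabolic Gagliardo--Nirenberg inequality, and a Boccardo--Gallou\"et summation over dyadic levels to reach the sharp exponents; then weak compactness and the monotonicity of $\xi\mapsto|\xi|^{p-2}\xi$ to identify the limit and the Riesz measure. You rightly flag that the sharp exponents in item (1) are where the real work sits and defer to \cite{KinnunenLindqvist1} for that.

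Two minor logical points in your write-up. In the paragraph for item (2) you already invoke ``$u_i\to u$ in $L^q(t_1,t_2;W^{1,q}(\Omega'))$ and $\nabla u_i\to\nabla u$ a.e.'', which is precisely item (3); the actual dependency chain is (1) $\Rightarrow$ (3) $\Rightarrow$ a.e.\ gradient convergence (along a subsequence) $\Rightarrow$ (2), and since your argument for (3) only uses the equations $\partial_t u_i-\Delta_p u_i=\mu_i$ from the obstacle problem and the bounds from (1), the circularity is only apparent. Second, in your Cauchy estimate for (3), to make $|\{|u_i-u_j|\ge k\}|$ small for fixed $k$ you first need $u_i\to u$ in measure (equivalently in $L^1_{\loc}$); this follows from the monotone pointwise convergence $u_i\uparrow u$ together with the local integrability of $u$ furnished by (1), and should be stated before the balancing-in-$k$ step.
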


The Barenblatt solution shows that these critical integrability exponents
for a $p$-superparabolic function and its gradient are optimal.

\begin{theorem}\label{thm:p-superpara-approx}
   Assume that $u$ is a $p$-superparabolic function in $\Omega_T$, let
    $\Omega'_{t_1,t_2}\Subset\Omega_T$ be an open set and let $\mu$ be the Riesz measure of $u$.
 Then there is a sequence $u_i$, $i=1,2,\dots$,  of solutions to
  \begin{displaymath}
    \partial_t u_i-\Delta_p u_i=f_i
    \end{displaymath}
  in $\Omega'_{t_1,t_2}$, where the functions $f_i$ are nonnegative and smooth, with the
  following properties:
 \begin{enumerate}
  \item $u_i\to u$ in $L^q(t_1,t_2;W^{1,q}(\Omega'))$ with $1\le q<p-1+1/(n+1)$
    and pointwise almost everywhere in $\Omega'_{t_1,t_2}$,
  \item $\nabla u_i\to\nabla u$
  pointwise almost everywhere in $\Omega'_{t_1,t_2}$,
  \item $\partial_t u_i\in L^{p'}(t_1,t_2;W^{-1,p'}(\Omega'))$ for every $i=1,2,\dots$, and
  \item  $f_i\to \mu$ in $L^{p'}(t_1,t_2;W^{-1,p'}(\Omega'))$ as $i\to\infty$.
  \end{enumerate}
\end{theorem}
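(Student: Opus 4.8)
The plan is to mirror the elliptic argument of Theorem~\ref{thm:p-superapprox}, replacing Lemma~\ref{lem:obstacle-approx} by its parabolic counterpart and the elliptic duality estimate \eqref{eq:startup-estimate} by a parabolic energy estimate that also controls the time derivative. First I would take the sequence $\widetilde u_i$ of continuous solutions to the parabolic obstacle problem in $\Omega'_{t_1,t_2}$ furnished by the previous theorem, so that $\widetilde u_i\to u$ in $L^q(t_1,t_2;W^{1,q}(\Omega'))$; by the cited existence result of \cite{BDMObstacle}, each $\widetilde u_i$ has a time derivative in $L^{p'}(t_1,t_2;W^{-1,p'}(\Omega'))$ and a Riesz measure $\widetilde\mu_i$ which, because the obstacle is smooth, lies in the dual space $L^{p'}(t_1,t_2;W^{-1,p'}(\Omega'))$ as well. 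The crucial intermediate step is therefore a parabolic analogue of Lemma~\ref{lem:obstacle-approx}: given a continuous weak supersolution $v$ whose Riesz measure $\nu$ belongs to $L^{p'}(t_1,t_2;W^{-1,p'}(\Omega'))$, one approximates $\nu$ by nonnegative smooth $g_j$ with $g_j\to\nu$ in that dual space (this requires a time-dependent version of Lemma~\ref{lem:meas-approx}, applied slicewise together with a measurability/mollification argument in $t$), solves the initial-boundary value problems $\partial_t w_j-\Delta_p w_j=g_j$ with $w_j-v\in L^p(t_1,t_2;W_0^{1,p}(\Omega'))$ and $w_j(\cdot,t_1)=v(\cdot,t_1)$, and tests the difference of the equations for $v$ and $w_j$ with $v-w_j$.

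In that testing step the parabolic term $\int\partial_t(v-w_j)\,(v-w_j)\,dx\,dt$ integrates to the nonnegative boundary contribution $\frac12\int_{\Omega'}|v(x,t_2)-w_j(x,t_2)|^2\dif x$, so it has the favorable sign and can be discarded; what remains is
\begin{displaymath}
  \int_{\Omega'_{t_1,t_2}}(\abs{\nabla v}^{p-2}\nabla v-\abs{\nabla w_j}^{p-2}\nabla w_j)\cdot\nabla(v-w_j)\dif x\dif t
  \le \abs{\inprod{\nu-g_j,v-w_j}},
\end{displaymath}
and the elementary vector inequality together with the Sobolev–Poincaré inequality in the spatial variable gives, exactly as in \eqref{eq:startup-estimate}, the bound $\norm{\nabla(v-w_j)}_{L^p}^{p-1}\le c\,\norm{\nu-g_j}_{L^{p'}(t_1,t_2;W^{-1,p'}(\Omega'))}$; hence $w_j\to v$ in $L^p(t_1,t_2;W^{1,p}(\Omega'))$, and writing $\partial_t w_j=\Delta_p w_j+g_j$ shows $\partial_t w_j\in L^{p'}(t_1,t_2;W^{-1,p'}(\Omega'))$. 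This produces, for each fixed $i$, a solution $u_i$ of $\partial_t u_i-\Delta_p u_i=f_i$ with $f_i$ nonnegative and smooth and $\norm{u_i-\widetilde u_i}_{L^p(t_1,t_2;W^{1,p}(\Omega'))}\le 1/i$, together with property~(3).

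With these $u_i$ in hand the remaining conclusions follow by the triangle inequality and a compactness argument as in the elliptic proof: since $\widetilde u_i\to u$ in $L^q(t_1,t_2;W^{1,q}(\Omega'))$ and $\norm{u_i-\widetilde u_i}\to0$ in the stronger norm, also $u_i\to u$ in $L^q(t_1,t_2;W^{1,q}(\Omega'))$; passing to a subsequence gives $\nabla u_i\to\nabla u$ pointwise a.e., which yields (1) and (2). For (4), one first upgrades the $L^q$ convergence of $f_i\to\mu$ in the sense of measures (obtained exactly as in Theorem~\ref{thm:p-superapprox}, using that $\abs{\nabla u_i}^{p-2}\nabla u_i$ is bounded in $L^r$ for $r<(n+1)/n$ or so and converges weakly to $\abs{\nabla u}^{p-2}\nabla u$) and then notes that, because each $f_i$ arises as the Riesz measure of the parabolic-Sobolev solution $u_i$, the differences $f_i-f_k$ are controlled in $L^{p'}(t_1,t_2;W^{-1,p'}(\Omega'))$ by $\norm{u_i-u_k}_{L^p(t_1,t_2;W^{1,p}(\Omega'))}$ plus the time-derivative terms, so that $(f_i)$ is Cauchy in that space and its limit must coincide with $\mu$. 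The main obstacle I expect is precisely the parabolic version of Lemma~\ref{lem:meas-approx}: approximating a positive measure lying in the \emph{parabolic} negative Sobolev space $L^{p'}(t_1,t_2;W^{-1,p'}(\Omega'))$ by nonnegative smooth functions, uniformly in the time variable, while keeping positivity — this needs the Bessel-potential characterization of the Appendix applied with care to the time dependence, and justifying that one may regularize in $t$ without destroying the dual-norm convergence or the sign.
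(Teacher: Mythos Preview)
Your overall strategy coincides with the paper's: the ``parabolic analogue of Lemma~\ref{lem:obstacle-approx}'' that you sketch is precisely the paper's Lemma~\ref{lem:paraobstacle-approx}, proved in the same way (test with $v-w_j$, discard the nonnegative time term, use the vector inequality and the spatial Sobolev inequality), and it relies on the parabolic dual approximation you anticipate, which is the paper's Lemma~\ref{lem:para-meas-approx}.  The passage to conclusions (1)--(3) via the obstacle approximants $\widetilde u_i$, the choice $\norm{u_i-\widetilde u_i}_{L^p(t_1,t_2;W^{1,p}(\Omega'))}\le 1/i$, and the triangle inequality is also exactly what the paper does.

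The one genuine gap is in your treatment of item (4).  The paper's proof, like your first step, establishes only the \emph{weak} convergence of $f_i$ to $\mu$ in the sense of measures (mirroring the elliptic Theorem~\ref{thm:p-superapprox}); it does not prove strong convergence in $L^{p'}(t_1,t_2;W^{-1,p'}(\Omega'))$, despite the wording of the statement.  Your attempted Cauchy upgrade cannot work in general: controlling $\norm{f_i-f_k}$ in the parabolic dual via $\norm{u_i-u_k}_{L^p(t_1,t_2;W^{1,p}(\Omega'))}$ requires $(u_i)$ to be Cauchy in that $L^p$-energy space, but you only have $u_i\to u$ in $L^q(t_1,t_2;W^{1,q}(\Omega'))$ for $q<p-1+1/(n+1)<p$.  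Indeed $u$ itself need not lie in $L^p(t_1,t_2;W^{1,p}(\Omega'))$ (the Barenblatt solution is the standard counterexample), and its Riesz measure $\mu$---a Dirac mass in that case---need not belong to $L^{p'}(t_1,t_2;W^{-1,p'}(\Omega'))$ at all, so strong convergence of $f_i$ to $\mu$ in that space is not available for a general $p$-superparabolic $u$.  Drop the upgrade and stop at weak convergence of measures, as the paper's proof does.
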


We use the following approximation result. As in the elliptic case,
the proof is based on Bessel potentials, and is found in Appendix \ref{ap:dual-approx}.

\begin{lemma}\label{lem:para-meas-approx}
  Let $\mu\in L^{p'}(0,T; W^{-1,p'}(\Omega))$ be a positive
  measure. Then there are smooth functions such
  that $f_i\to\mu$ in $L^{p'}(0,T; W^{-1,p'}(\Omega))$ as $i\to \infty$.
\end{lemma}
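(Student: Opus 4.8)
The plan is to produce the $f_i$ by mollifying $\mu$, following the same route as in the elliptic Lemma~\ref{lem:meas-approx} and carrying the time variable along by a second mollification. The point of the Bessel potential machinery is that it makes the two requirements on the approximants---positivity and $C^\infty$-smoothness---automatic, and it reduces the convergence to a classical fact: if $\rho_\varepsilon$ is a standard nonnegative mollifier, then $\mu*\rho_\varepsilon$ is smooth and nonnegative, and its distance to $\mu$ in $W^{-1,p'}$ can be read off after applying the order-one Bessel potential operator $\mathcal{G}$ (convolution with the positive Bessel kernel $G$, with multiplier $(1+\abs{\xi}^2)^{-1/2}$), because convolutions commute and $\mathcal{G}$ identifies $W^{-1,p'}$ with $L^{p'}$ up to equivalence of norms.

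Concretely, I would first recall Calder\'on's identification $W^{1,p}(\R^n)=\mathcal{G}(L^p(\R^n))$ with equivalent norms and dualize it, so that $\nu\in W^{-1,p'}$ if and only if $\mathcal{G}\nu\in L^{p'}$, again with equivalent norms. Since $\mu\in L^{p'}(0,T;W^{-1,p'}(\Omega))$ is a positive measure that charges no time slice, it disintegrates as $\mu=\int_0^T\mu_t\dif t$ with $\mu_t\ge 0$ a measure lying in $W^{-1,p'}(\Omega)$ for almost every $t$ and with $t\mapsto\norm{\mu_t}_{W^{-1,p'}(\Omega)}$ in $L^{p'}(0,T)$. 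Mollifying in space, put $g_\varepsilon(\cdot,t)=\mu_t*\rho_\varepsilon$. For almost every $t$, using $\mathcal{G}(\mu_t*\rho_\varepsilon)=(\mathcal{G}\mu_t)*\rho_\varepsilon$ together with $\mathcal{G}\mu_t\in L^{p'}$,
\[
  \norm{g_\varepsilon(\cdot,t)-\mu_t}_{W^{-1,p'}(\Omega)}
  \le c\,\norm{(\mathcal{G}\mu_t)*\rho_\varepsilon-\mathcal{G}\mu_t}_{L^{p'}}\xrightarrow[\varepsilon\to0]{}0,
\]
while the same commutation and Young's inequality give the uniform bound $\norm{g_\varepsilon(\cdot,t)-\mu_t}_{W^{-1,p'}(\Omega)}\le c\,\norm{\mu_t}_{W^{-1,p'}(\Omega)}$; dominated convergence in $t$ then yields $g_\varepsilon\to\mu$ in $L^{p'}(0,T;W^{-1,p'}(\Omega))$. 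To make the approximants jointly smooth, extend $t\mapsto\mu_t$ by zero to $\R$ and additionally convolve in time with a nonnegative mollifier $\eta_\delta$; by the Bochner-space version of the same estimate this is convergent in $L^{p'}(\R;W^{-1,p'}(\Omega))$, and nonnegativity and joint smoothness are preserved. A diagonal choice $\varepsilon_i,\delta_i\to0$, restricted to $\Omega_T$, then produces the sequence $f_i$.

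The main obstacle is the Bessel potential input itself, together with the bookkeeping needed to stay on the bounded set $\Omega$ rather than on all of $\R^n$: a positive measure in $W^{-1,p'}(\Omega)$ need not extend by zero to an element of $W^{-1,p'}(\R^n)$---its mass may accumulate too fast toward $\partial\Omega$---so the mollification above has to be organized with this in mind, for instance by first reducing (via a cutoff or a localization argument, or by using boundary-adapted one-sided mollifiers) to measures supported in a fixed compact subset of $\Omega$ before mollifying. This is precisely the place where the fine potential-theoretic facts (capacities, quasicontinuity) enter. Once this localization is secured, the passage from the elliptic Lemma~\ref{lem:meas-approx} to the present parabolic statement is essentially Fubini plus the dominated convergence argument above, adding nothing beyond the Bochner-space forms of the classical mollifier estimates.
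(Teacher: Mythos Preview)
Your approach is essentially the paper's: identify $W^{-1,p'}$ with $L^{p'}$ through the order-one Bessel potential and mollify. Two implementation differences are worth noting. First, the paper mollifies jointly in space and time with a single convolution by $\eta_\varepsilon$ on $\R^{n+1}$, rather than in two stages followed by a diagonal extraction; since the purely spatial operator $T_{-1}$ still commutes with the full space--time convolution, the distance in the dual space collapses directly to $\norm{g-\eta_\varepsilon*g}_{L^{p'}(\R^{n+1})}$, and no dominated-convergence argument in $t$ or disintegration of $\mu$ is needed. Second, what you identify as the ``main obstacle''---passing from $\Omega$ to $\R^n$---is disposed of in the paper simply by extending the functional $\mu$ to $W^{-1,p'}(\R^n)$ via the Hahn--Banach theorem; no capacities, quasicontinuity, cutoffs, or boundary-adapted mollifiers enter. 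Your route works, but it is more elaborate than necessary on both points.
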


With the previous lemma, we can prove the following convergence result
for solutions of the obstacle problems.

\begin{lemma}\label{lem:paraobstacle-approx}
  The claim of Theorem \ref{thm:p-superpara-approx} holds when $u$ is
  a solution to the obstacle problem with a smooth obstacle.
  In addition,  the sequence converges
  in $L^p(t_1,t_2;W^{1,p}(\Omega'))$.
\end{lemma}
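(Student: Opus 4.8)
The plan is to follow the proof of Lemma~\ref{lem:obstacle-approx}, carrying along the extra parabolic time term. Since $u$ solves the obstacle problem with a smooth obstacle, it is a continuous supersolution of \eqref{eq:p-parabolic}, it belongs to $L^p(t_1,t_2;W^{1,p}(\Omega'))$, its Riesz measure $\mu$ is nonnegative, and, as recalled above, $\partial_t u\in L^{p'}(t_1,t_2;W^{-1,p'}(\Omega'))$ by the existence theory of \cite{BDMObstacle}. From the identity $\partial_t u-\Delta_p u=\mu$ together with $\abs{\nabla u}^{p-2}\nabla u\in L^{p'}(\Omega'_{t_1,t_2})$---so that $\Delta_p u\in L^{p'}(t_1,t_2;W^{-1,p'}(\Omega'))$---it follows that $\mu\in L^{p'}(t_1,t_2;W^{-1,p'}(\Omega'))$. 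By Lemma~\ref{lem:para-meas-approx} choose smooth nonnegative $f_i\to\mu$ in $L^{p'}(t_1,t_2;W^{-1,p'}(\Omega'))$, and let $u_i$ be the (unique) solution of
\[
  \partial_t u_i-\Delta_p u_i=f_i \text{ in } \Omega'_{t_1,t_2},\qquad u_i=u \text{ on } \partial_p\Omega'_{t_1,t_2};
\]
existence and uniqueness are standard, and the smoothness of the obstacle guarantees that the boundary and initial data inherited from $u$ are admissible.

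Next I would use $u-u_i$ as a test function in the weak formulations for $u$ and $u_i$---rigorously via Steklov averages in time, where the usual care with the nonexistent time derivative is required---and subtract the resulting identities. The operator term $\int_{\Omega'_{t_1,t_2}}(\abs{\nabla u}^{p-2}\nabla u-\abs{\nabla u_i}^{p-2}\nabla u_i)\cdot\nabla(u-u_i)\dif x\dif t$ is nonnegative by monotonicity, and the time term reduces to $\tfrac12\int_{\Omega'}\abs{(u-u_i)(x,t_2)}^2\dif x\geq0$ since $u-u_i$ has zero initial trace at $t_1$. Hence, by the elementary inequality $\abs{\xi-\eta}^p\leq c\,(\abs{\xi}^{p-2}\xi-\abs{\eta}^{p-2}\eta)\cdot(\xi-\eta)$ (valid for $p\geq2$) and the Poincar\'e inequality on $\Omega'_{t_1,t_2}$ for functions in $L^p(t_1,t_2;W^{1,p}_0(\Omega'))$,
\[
  \int_{\Omega'_{t_1,t_2}}\abs{\nabla(u-u_i)}^p\dif x\dif t
  \leq c\,\inprod{\mu-f_i,u-u_i}
  \leq c\,\norm{\mu-f_i}_{L^{p'}(t_1,t_2;W^{-1,p'}(\Omega'))}\norm{\nabla(u-u_i)}_{L^p(\Omega'_{t_1,t_2})},
\]
so that $\norm{\nabla(u-u_i)}_{L^p(\Omega'_{t_1,t_2})}^{p-1}\leq c\,\norm{\mu-f_i}_{L^{p'}(t_1,t_2;W^{-1,p'}(\Omega'))}\to0$. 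This gives $u_i\to u$ in $L^p(t_1,t_2;W^{1,p}(\Omega'))$, which is the additional assertion, and a fortiori in $L^q(t_1,t_2;W^{1,q}(\Omega'))$ for every $1\leq q\leq p$, in particular for $q<p-1+1/(n+1)$.

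Passing to a subsequence then yields $u_i\to u$ and $\nabla u_i\to\nabla u$ pointwise almost everywhere in $\Omega'_{t_1,t_2}$, which is (1) and (2). For (3), the equation gives $\partial_t u_i=f_i+\Delta_p u_i$, which lies in $L^{p'}(t_1,t_2;W^{-1,p'}(\Omega'))$ because $f_i$ is smooth and $\abs{\nabla u_i}^{p-2}\nabla u_i\in L^{p'}(\Omega'_{t_1,t_2})$; and (4) is immediate from the choice of $(f_i)$. The main obstacle is the rigorous justification of testing with $u-u_i$, since no genuine time derivative is at hand: this is handled by Steklov averaging, using that $\partial_t u\in L^{p'}(t_1,t_2;W^{-1,p'}(\Omega'))$ and that $u-u_i$ lies in $L^p(t_1,t_2;W^{1,p}_0(\Omega'))$ with vanishing initial trace, so that the parabolic integration-by-parts identity producing the nonnegative boundary term at $t_2$ is legitimate.
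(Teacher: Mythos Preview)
Your proposal is correct and follows essentially the same route as the paper: show $\mu\in L^{p'}(t_1,t_2;W^{-1,p'}(\Omega'))$ from $\partial_t u\in L^{p'}(t_1,t_2;W^{-1,p'}(\Omega'))$ and $\abs{\nabla u}^{p-1}\in L^{p'}$, approximate $\mu$ by smooth nonnegative $f_i$ via Lemma~\ref{lem:para-meas-approx}, solve the boundary value problems with the same parabolic boundary data, test with $u-u_i$ (justified by Steklov averages), discard the nonnegative time term, and conclude via the vector inequality and Poincar\'e that $\norm{\nabla(u-u_i)}_{L^p}^{p-1}\le c\norm{\mu-f_i}\to0$. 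Your verification of items (3) and (4) is a small addition the paper leaves implicit, but otherwise the arguments coincide.
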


\begin{proof}
  Let $\mu$ be the Riesz measure of $u$, $\Omega'\Subset\Omega$ and $0<t_1<t_2<T$.
  Since the obstacle is smooth, we have
  \[
  \partial_t u\in L^{p'}(t_1,t_2;W^{-1,p'}(\Omega')),
  \]
 and consequently
  \begin{displaymath}
  \begin{split}
    |\inprod{\mu,\varphi}|
    &=\left|\int_{\Omega'_{t_1,t_2}}\varphi\dif\mu\right|\\
    &=\left| -\int_{\Omega'_{t_1,t_2}}u \partial_t\varphi\dif x\dif t
    +\int_{\Omega'_{t_1,t_2}}\abs{\nabla u}^{p-2}\nabla u\cdot\nabla \varphi\dif x\dif t\right|\\
    &\leq \norm{\partial_tu}_{L^{p'}(t_1,t_2;W^{-1,p'}(\Omega'))}
    \norm{\varphi}_{L^p(t_1,t_2;W_0^{1,p}(\Omega'))}
  \\
  &\qquad\qquad  +\norm{\nabla u}_{L^p(\Omega'_{t_1,t_2})}^{p-1}\norm{\nabla \varphi}_{L^p(\Omega'_{t_1,t_2})}
 \\
 &\le\left (\norm{\partial_tu}_{L^{p'}(t_1,t_2;W^{-1,p'}(\Omega'))}
  +\norm{\nabla u}_{L^p(\Omega'_{t_1,t_2})}^{p-1}\right)
  \norm{\varphi}_{L^p(t_1,t_2;W_0^{1,p}(\Omega'))}
  \end{split}
   \end{displaymath}
  for every $\varphi\in C_0^\infty(\Omega'_{t_1,t_2})$.
 This implies that
 \[
 \mu\in L^{p'}(t_1,t_2;W^{-1,p'}(\Omega'))
 \]
 with
  \begin{displaymath}
    \norm{\mu}_{L^{p'}(t_1,t_2;W^{-1,p'}(\Omega'))}\leq \norm{\partial_t
      u}_{L^{p'}(t_1,t_2;W^{-1,p'}(\Omega'))}+
    \norm{\nabla u}^{p-1}_{L^p(\Omega'_{t_1,t_2})}.
  \end{displaymath}
 We choose nonnegative functions $f_i\in C^\infty(\Omega'_{t_1,t_2})$, $i=1,2,\dots$, such that
  \begin{displaymath}
    f_i\to \mu  \quad \text{in} \quad
    L^{p'}(t_1,t_2;W^{-1,p'}(\Omega'))
  \end{displaymath}
 as $i\to\infty$.
 Let $u_i$, $i=1,2,\ldots$, be the unique solutions
  of the boundary value problems
  \begin{displaymath}
    \begin{cases}
      \partial_t u_i-\Delta_p u_i=f_i& \quad \text{in}\quad \Omega'_{t_1,t_2},\\
      u_i=u& \quad\text{on}\quad \partial_p \Omega'_{t_1,t_2}.
    \end{cases}
  \end{displaymath}
  Using $u-u_i$ as a test function in the equations for $u$ and $u_i$, and
  subtracting the obtained equations, we have
  \begin{equation}\label{eq:diffpara}
 \begin{split}
    &\inprod{\partial_t (u-u_i),u-u_i}\\
    &\qquad+\int_{\Omega'_{t_1,t_2}}(\abs{\nabla
      u}^{p-2}\nabla u-\abs{\nabla
      u_i}^{p-2}\nabla u_i)\cdot(\nabla u-\nabla u_i)\dif x\dif t\\
    &=\inprod{\mu-f_i,u-u_i}.
  \end{split}
  \end{equation}
  A formal computation gives
  \begin{displaymath}
  \begin{split}
    \inprod{\partial_t (u-u_i),u-u_i}
&=\int_{\Omega'_{t_1,t_2}}(u-u_i) \partial_t (u-u_i)\dif x\dif t\\
 &=\frac{1}{2}\int_{\Omega'}(u-u_i)^2(x,t_2)\dif x\geq 0.
 \end{split}
  \end{displaymath}
A rigorous argument can be based on Steklov averages, see \cite{DiB}.
  We conclude that
  \begin{equation}\label{eq:graddiff}
 \begin{split}
    \int_{\Omega'_{t_1,t_2}}&\abs{\nabla( u-u_i)}^p\dif x\dif t\leq
    \inprod{\mu-f_i,u-u_i}\\
    &\leq
    c\norm{\mu-f_i}_{L^{p'}(t_1,t_2;W^{-1,p'}(\Omega'))}\norm{u-u_i}_{L^p(t_1,t_2;W_0^{1,p}(\Omega'))}.
 \end{split}
  \end{equation}
 Here we also applied an elementary inequality for vectors.
 By the Sobolev inequality we have
 \[
  \norm{u-u_i}_{L^p(\Omega'_{t_1,t_2})}
 \le c\norm{\nabla(u-u_i)}_{L^p(\Omega'_{t_1,t_2})}
 \]
  with $c=c(n,p)$.
  From \eqref{eq:graddiff} and the Sobolev inequality above, we obtain
  \begin{displaymath}
    \norm{\nabla( u-u_i)}_{L^p(\Omega'_{t_1,t_2})}
    \leq c\norm{\mu-f_i}_{L^{p'}(t_1,t_2;W^{-1,p'}(\Omega'))}^{1/(p-1)}.
  \end{displaymath}
  The right-hand side tends to zero as $i\to\infty$.
  Thus we obtain
 \[
 u_i\to u\quad\text{in}\quad L^p(t_1,t_2;W^{1,p}(\Omega'))
 \]
 as $i\to\infty$.
 By passing to a subsequence, if necessary, we obtain the convergence almost everywhere.
\end{proof}

\begin{proof}[Proof of Theorem \ref{thm:p-superpara-approx}]
  The proof is now similar to that of Theorem \ref{thm:p-superapprox};
  let $\widetilde{u_i}$, $i=1,2,\dots$, be the solutions to the obstacle problem
  converging to $u$, and apply Lemma \ref{lem:paraobstacle-approx} to
  pick the functions $u_i$, $i=1,2,\dots$, such that
  \[
  \norm{\widetilde{u}_i-u_i}_{L^p(t_1,t_2;W^{1,p}(\Omega'))}\leq\frac1i.
  \]
  The convergence of $u_i$ to $u$ then follows by using the
  triangle inequality. For the weak convergence of the functions
  $f_i$, we note that $(\abs{\nabla u_i}^{p-2}\nabla u_i)$ is bounded in
  $L^r(\Omega'_{t_1,t_2})$ for some $r>1$, and by the pointwise convergence
  we have that the weak limit must be $\abs{\nabla u}^{p-2}\nabla u$;
  thus a computation similar to the one in the proof of
  Theorem \ref{thm:p-superapprox}  shows the weak convergence of  the sequence $(f_i)$ to $\mu$
  as $i\to\infty$.
\end{proof}

\section{A convergence result for weak supersolutions}

As an application of the above results, we prove a pointwise convergence theorem
for a bounded sequence of weak supersolutions of the $p$-parabolic equation in the
slow diffusion case $p>2$.
This kind of approximation is needed the proof of Theorem 4.3 in \cite{KLP}, and we
provide a detailed argument here.
The idea of the proof is to use Theorem \ref{thm:p-superpara-approx}
to choose approximations of the supersolutions $u_i$, apply a
compactness result to the approximations to obtain a limit function, and then show that also
the original functions converge to the same limit.
The advantage of this approach is that the Riesz measures of the approximations are known
to be functions, so that norms of their time derivatives can be estimated on time slices.

\begin{theorem}
  Let $u_i$, $i=1,2,\ldots$, be weak supersolutions of
  \eqref{eq:p-parabolic}  in $\Omega_T$ such that $\abs{u_i}\leq M<\infty$ for every $i=1,2,\dots$.
  Then, for a subsequence still denoted by $(u_i)$, there is a function $u$
  such that $u_i\to u$ and $\nabla u_i\to \nabla u$ pointwise almost  everywhere in $\Omega_T$
  as $i\to\infty$,  and, moreover, $u$ is a weak supersolution of \eqref{eq:p-parabolic}  in $\Omega_T$.
\end{theorem}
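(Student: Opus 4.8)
The plan is to reduce everything to the approximations provided by Theorem~\ref{thm:p-superpara-approx}, whose Riesz measures are honest functions, and then to combine a parabolic compactness result with a monotonicity argument for the gradients. Since the conclusion is local, it is enough to prove it in a fixed subcylinder $Q=\Omega'_{t_1,t_2}\Subset\Omega_T$ and afterwards exhaust $\Omega_T$ by such cylinders, passing to a diagonal subsequence. Each $u_i$ (after choosing a lower semicontinuous representative) is $p$-superparabolic, so Theorem~\ref{thm:p-superpara-approx} applies in $Q$: for every $i$ we may select a supersolution $v_i$ of $\partial_t v_i-\Delta_p v_i=f_i$ in $Q$ with $f_i\ge0$ smooth, $\partial_t v_i\in L^{p'}(t_1,t_2;W^{-1,p'}(\Omega'))$, and $\norm{v_i-u_i}_{L^q(t_1,t_2;W^{1,q}(\Omega'))}\le 1/i$. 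Because the obstacle solutions used in that construction take values in $[-M,M]$ by comparison (as $-M\le u_i\le M$), the Caccioppoli estimate for bounded supersolutions together with the $L^p(W^{1,p})$-convergence in Lemma~\ref{lem:paraobstacle-approx} yields uniform bounds $\norm{v_i}_{L^p(Q'')}+\norm{\nabla v_i}_{L^p(Q'')}\le C(M)$ for every subcylinder $Q''\Subset Q$, and one may arrange that the measures $f_i\dif x\dif t$ have uniformly bounded mass on each such $Q''$.

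From $\partial_t v_i=\dive(\abs{\nabla v_i}^{p-2}\nabla v_i)+f_i$ and the previous bounds, $\partial_t v_i$ is uniformly bounded in $L^{p'}(W^{-1,p'})+L^1$ on each $Q''$, while $v_i$ is uniformly bounded in $L^p(W^{1,p}(Q''))$. A standard parabolic compactness result (the Aubin--Lions--Simon lemma, in the version where the time derivative is allowed to lie in $L^1$ plus a negative Sobolev space) then gives, along a subsequence, $v_i\to u$ strongly in $L^1_{\loc}(Q)$ and pointwise almost everywhere, with $u\in L^p_{\loc}(0,T;W^{1,p}_{\loc})$ by weak lower semicontinuity of the norm. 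Since $\norm{v_i-u_i}_{L^q(W^{1,q})}\to0$, the same subsequence satisfies $u_i\to u$ almost everywhere, and, after a further extraction, $\nabla u_i-\nabla v_i\to0$ almost everywhere.

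The crucial step is to pass from weak to almost everywhere convergence of the gradients $\nabla v_i$. Here the whole point of working with the $v_i$ is that $f_i$ is a function and $\partial_t v_i$ genuinely belongs to $L^{p'}(W^{-1,p'})$, so the equation for $v_i$ may be tested, rigorously via Steklov averages, against test functions that depend on $v_i$ itself. I would carry out the usual monotonicity argument of Leray--Lions and Boccardo--Murat: with a space--time cutoff $\zeta$ and truncations $T_k$, one tests the equation for $v_i$ against $T_k(v_i-\Phi)\zeta$, where $\Phi$ is a smooth function close to $u$; the parabolic term is nonnegative up to controllable boundary contributions by convexity of the primitive of $T_k$, the right-hand side contribution tends to zero because $f_i$ converges weakly while $T_k(v_i-\Phi)\zeta$ stays bounded, and $\abs{\nabla v_i}^{p-2}\nabla v_i$ is equi-integrable on $Q''$. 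This leads to $\limsup_{i}\int_{Q''}(\abs{\nabla v_i}^{p-2}\nabla v_i-\abs{\nabla u}^{p-2}\nabla u)\cdot\nabla(v_i-u)\,\zeta\dif x\dif t\le0$, and then strict monotonicity of $\xi\mapsto\abs{\xi}^{p-2}\xi$ together with the customary measure-theoretic lemma forces $\nabla v_i\to\nabla u$ in measure on each $Q''$, hence almost everywhere along a subsequence. Consequently $\nabla u_i\to\nabla u$ almost everywhere as well.

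It remains to pass to the limit in the definition. Since $\abs{\nabla u_i}^{p-2}\nabla u_i\to\abs{\nabla u}^{p-2}\nabla u$ almost everywhere and is bounded in $L^{p'}_{\loc}$, it converges weakly in $L^{p'}_{\loc}$ by Vitali's theorem; combined with $u_i\to u$ in $L^1_{\loc}$ this lets one pass to the limit in $-\int u_i\,\partial_t\varphi\dif x\dif t+\int\abs{\nabla u_i}^{p-2}\nabla u_i\cdot\nabla\varphi\dif x\dif t\ge0$ for every nonnegative $\varphi\in C_0^\infty(Q)$, so $u$ is a weak supersolution in $Q$; a diagonal argument over an exhaustion of $\Omega_T$ finishes the proof. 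The step I expect to be the main obstacle is the almost everywhere convergence of the gradients: in contrast with a stability theorem, the limit $u$ is not known in advance to solve any convenient equation, so the monotonicity estimate must be produced directly, and the merely $L^1$-type control on the right-hand sides $f_i$ forces the use of truncations and a careful treatment of the parabolic time term.
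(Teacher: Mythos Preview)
Your proposal follows the paper's proof closely for the compactness step: both approximate each $u_i$ by a $v_i$ with smooth Riesz data via Theorem~\ref{thm:p-superpara-approx}, derive uniform bounds on $\partial_t v_i$ in a space of the form $L^1(W^{-1,\alpha})$ by exploiting that $f_i$ is an honest function and can therefore be estimated on individual time slices, and then apply Simon's compactness theorem to extract a limit $u$ with $u_i\to u$ almost everywhere after an exhaustion and diagonal argument.

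The difference lies in the handling of the gradients. The paper outsources this step entirely: once $u_i\to u$ pointwise almost everywhere and the $u_i$ are uniformly bounded supersolutions, it simply invokes Theorem~5.3 of \cite{KKP}, which delivers both that $u$ is a supersolution and that $\nabla u_i\to\nabla u$ almost everywhere. Your route instead carries out a Boccardo--Murat monotonicity argument on the approximations $(v_i)$, using that $\partial_t v_i$ genuinely lies in the parabolic dual space so that truncated test functions depending on $v_i$ are admissible. This is a valid and more self-contained alternative; the paper's route is shorter but relies on an external black box, while yours makes visible why the passage through the $v_i$ (rather than the $u_i$ themselves) is the point. One small correction: the right-hand side $\int f_i\,T_k(v_i-\Phi)\zeta$ does not tend to zero as $i\to\infty$ merely from weak convergence of $f_i$ against a bounded sequence---the correct control is $\bigl|\int f_i\,T_k(v_i-\Phi)\zeta\bigr|\le k\norm{f_i}_{L^1(\spt\zeta)}\le Ck$, uniformly in $i$, and one sends $k\to0$ in the final measure-theoretic step. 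With this fix the argument goes through.
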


\begin{proof}
  Let $\mu_i$ be the Riesz measures associated to $u_i$, $i=1,2,\ldots$
  For $\Omega''_{s_1,s_2}\Subset\Omega'_{t_1,t_2}\Subset \Omega_T$, we have
  \begin{displaymath}
    \norm{\nabla u_i}_{L^p(\Omega'_{t_1,t_2})}\leq cM, \quad\text{and}
    \quad\mu_i(\Omega'_{t_1,t_2})\leq cM
  \end{displaymath}
 by Caccioppoli's inequality, see the proof of Theorem 4.3 in \cite{KLP}.
  By Theorem \ref{thm:p-superpara-approx}, for each $i$, we may choose  $v_i$ such
  that
  \begin{displaymath}
    \partial_t v_i-\Delta_p v_i=f_i
  \end{displaymath}
where $f_i$ is smooth with the properties
  \begin{displaymath}
    \norm{f_i}_{L^1(\Omega''_{s_1,s_2})}\leq 2cM, \quad\text{and}\quad
    \norm{u_i-v_i}_{L^q(t_1,t_2;W^{1,q}(\Omega'))}
    \leq \frac1i
  \end{displaymath}
  for some $q$ with $p-1<q<p$.

  We aim at applying a compactness result of \cite{Simon} to the sequence $(v_i)$.
  To this end, we need bounds for the gradients and the time derivatives
  of the functions $v_i$.
  We have
  \begin{align*}
    \norm{v_i}_{L^q(t_1,t_2;W^{1,q}(\Omega'))}\leq &\norm{ u_i}_{L^q(t_1,t_2;W^{1,q}(\Omega'))}
    +\norm{v_i-u_i}_{L^q(t_1,t_2;W^{1,q}(\Omega'))}\\
    \leq  & c(M+\norm{\nabla u_i}_{L^p(\Omega'_{t_1,t_2})})+\frac1i  \\
    \leq & cM+1.
  \end{align*}

Then we consider a bound for the time derivative.  From the equation satisfied by $v_i$ we see that
  \begin{equation}\label{eq:comppf1}
    \partial_t v_i\in L^1(\Omega''_{s_1,s_2})+L^r(s_1,s_2;W^{-1,r}(\Omega''))
  \end{equation}
  where $r=q/(p-1)>1$, and functions in $L^1(\Omega''_{s_1,s_2})$ are
  identified with distributions in the usual manner. We denote the functional in the
  second component in \eqref{eq:comppf1} by $L$ and get
  \[
  \begin{split}
    \abs{L\vp}&=\scabs{\int_{\Omega''_{s_1,s_2}} \abs{\nabla v_i}^{p-2} \nabla v_i\cdot \nabla \vp \ud x \ud t}\\
    & \le\|\nabla v_i\|_{L^{q}(\Omega''_{s_1,s_2})}^{p-1} \|\nabla \vp\|_{L^{r'}(\Omega''_{s_1,s_2})}.
  \end{split}
  \]
  Thus $L$ is a bounded functional in the dual space $L^r(s_1,s_2;W^{-1,r}(\Omega''))$, because of the $L^q$-bound for $\nabla v_i$.

  The estimation of the remaining term heavily uses the fact that
  $f_i$ is an $L^1$-function instead of a measure, and can thus be
  estimated on almost every time slice.
   By the Sobolev inequality for $s'>n$, we have
  \begin{align*}
    \left|\int_{\Omega''} f_i(x,t)\varphi(x)\dif x\right|\leq &
    \norm{f_j(\cdot,t)}_{L^1(\Omega'')}\norm{\varphi}_{L^\infty(\Omega'')}\\
    \leq & c\norm{f_i(\cdot,t)}_{L^1(\Omega'')} \norm{\nabla \varphi}_{L^{s'}(\Omega'')}\\
    \leq & c\norm{f_i(\cdot,t)}_{L^1(\Omega'')} \norm{\varphi}_{W^{1,s'}(\Omega'')}.
  \end{align*}
  for all $\varphi\in C^\infty_0(\Omega'')$. Thus
  \begin{displaymath}
    \norm{f_i(\cdot,t)}_{W^{-1,s}(\Omega'')}\leq c \norm{f_i(\cdot,t)}_{L^1(\Omega'')}
  \end{displaymath}
  for almost every $t$, $s_1<t<s_2$, and $1<s<n/(n-1)$ since $C_0^\infty(\Om)$ is dense in $W^{1,s'}_0(\Omega'')$.
  Integrating this estimate in time, we see that
  \begin{displaymath}
    \norm{f_i}_{L^1(s_1,s_2;W^{-1,s}(\Omega''))}\leq c\norm{f_i}_{L^1(\Omega''_{s_1,s_2})}.
  \end{displaymath}
  Combining these estimates, we arrive at
  \begin{displaymath}
    \norm{\partial_t v_i}_{L^1(s_1,s_2;W^{-1,\alpha}(\Omega''))}\leq c,
  \end{displaymath}
  where
  \begin{displaymath}
    1<\alpha<\min\left\{\frac{q}{p-1},\frac{n}{n-1}\right\}.
  \end{displaymath}

 By the estimates in the previous paragraph and Corollary 4 in \cite{Simon}, $(v_i)$ is compact in $L^1(\Omega''_{s_1,s_2})$.
  Thus there is a function $u$ such that $v_i\to u$ in $L^1(\Omega''_{s_1,s_2})$
  for a subsequence.  For the same subsequence we have
  \begin{align*}
    \norm{u-u_i}_{L^1(\Omega''_{s_1,s_2})}\leq & \norm{u-v_i}_{L^1(\Omega''_{s_1,s_2})}
    +\norm{u_i-v_i}_{L^1(\Omega''_{s_1,s_2})}\\
    \leq & \norm{u-v_i}_{L^1(\Omega''_{s_1,s_2})} +\frac ci\to 0 \\
  \end{align*}
  as $i\to \infty$.
  Thus for a  subsequence we get the pointwise almost everywhere convergence of  $u_i$ to $u$ as $i\to\infty$.

  We may pass from convergence in $\Omega''_{s_1,s_2}\Subset \Omega_T$ to the
  full set $\Omega_T$ by the usual exhaustion argument. An application
  of Theorem 5.3 in \cite{KKP} shows that
  $u$ is a supersolution, as well as the pointwise almost everywhere convergence of the gradients.
\end{proof}

\appendix

\section{Approximation in the dual of Sobolev space}

\label{ap:dual-approx}

In this section, we prove Lemmas \ref{lem:meas-approx} and
\ref{lem:para-meas-approx}.
%
We use the characterization of Sobolev spaces on $\R^n$ by Bessel
potentials. For functions $u$ in the Schwartz class $\S$ of rapidly
decreasing functions and $\alpha\geq 0$, define the operator
\begin{displaymath}
  T_\alpha u=g_\alpha\ast u,
\end{displaymath}
where $g_\alpha$ is the Bessel kernel with the property that its Fourier transform
$g_\alpha$ is
\begin{displaymath}
  \Hat{g}_\alpha(\xi)=(1+\abs{\xi}^2)^{-\alpha/2}.
\end{displaymath}
Thus $T_\alpha$ has an inverse $T_{-\alpha}$, with the Fourier multiplier
\begin{displaymath}
  (1+\abs{\xi}^2)^{\alpha/2}.
\end{displaymath}
It is clear that both $T_\alpha$ and $T_{-\alpha}$ map $\S$ into
itself, so we may define both operators on the class of tempered distributions
$\S'$ in the usual manner. Note that the kernel $g_\alpha$ is
symmetric and consequently $T_\alpha$ and $T_{-\alpha}$ are self-adjoint operators.

Let $1<p<\infty$. The Bessel potential space is now defined by
\begin{displaymath}
  \L^{\alpha,p}(\R^n)=\{u\in \S':u=T_\alpha f\text{ for }f\in L^{p}(\R^n)\},
\end{displaymath}
and its dual is
\begin{displaymath}
  \L^{-\alpha,p'}(\R^n)=\{v\in \S':v=T_{-\alpha} g\text{ for }g\in L^{p'}(\R^n)\},
\end{displaymath}
with the duality pairing
\begin{displaymath}
  \inprod{u,v}_{\L^{\alpha,p},\L^{-\alpha,p'}}=\int_{\R^n}fg \dif x,
\end{displaymath}
and the norms
\begin{displaymath}
  \norm{u}_{\L^{\alpha,p}(\R^n)}=\norm{f}_{L^p(\R^n)}\quad\text{and}\quad
  \norm{v}_{\L^{-\alpha,p'}(\R^n)}=\norm{g}_{L^{p'}(\R^n)}.
\end{displaymath}
For $\alpha=0,1,2,\ldots$, we have
\begin{displaymath}
  \L^{\alpha,p}(\R^n)=W^{\alpha,p}(\R^n) \quad\text{and}\quad \L^{-\alpha,p'}(\R^n)
  =W^{-\alpha,p'}(\R^n)
\end{displaymath}
with equivalent norms, see \cite{AH} and \cite{St}.

\begin{proof}[Proof of Lemma \ref{lem:meas-approx}]
  Let $\mu\in W^{-1,p'}(\Omega)$ be a positive measure. We construct a sequence of positive, smooth functions
  $f_i$ such that
  \begin{displaymath}
    \norm{\mu-f_i}_{W^{-1,p'}(\Omega)}\to 0
  \end{displaymath}
  as $i\to\infty$.

  We extend all functions in $W^{1,p}_0(\Omega)$ by zero to $\R^n$, and we also extend $\mu$ to $W^{-1,p}(\R^n)$ by the Hahn-Banach theorem.
  Hence it suffices to find smooth functions $f_i$ defined on $\R^n$ such that
  \begin{displaymath}
    \norm{\mu-f_i}_{W^{-1,p'}(\R^n)}\to 0
  \end{displaymath}
   as $i\to\infty$.
   Through Bessel potentials, we find a function $g\in L^{p'}(\R^n)$ such that
  \begin{displaymath}
    \mu=T_{-1}g.
  \end{displaymath}
  Define
  \begin{displaymath}
    \mu_\varepsilon=\eta_\varepsilon\ast\mu
  \end{displaymath}
  where $\eta_\varepsilon$ is the standard mollifier. Since
  mollification, being defined by convolution, is multiplication on
  the Fourier transform side, and the same is true for $T_{-1}$, we
  have
  \begin{equation}\label{eq:approx-lemma-proof1}
    \mu_\varepsilon=\eta_\varepsilon\ast\mu=\eta_{\varepsilon}\ast(T_{-1}g)
    =T_{-1}(\eta_\varepsilon\ast g).
  \end{equation}
  Thus
  \begin{align*}
    \norm{\mu-\mu_\varepsilon}_{W^{-1,p'}(\R^n)}\leq &
    c\norm{\mu-\mu_\varepsilon}_{\L^{-1,p'}(\R^n)}\\
    = & c\norm{T_{-1}(g-\eta_{\varepsilon}\ast g)}_{\L^{-1,p'}(\R^n)}\\
    = & c\norm{g-\eta_{\varepsilon}\ast g}_{L^{p'}(\R^n)}\to 0
  \end{align*}
  as $\varepsilon\to 0$.
  Finally, we note that since $\mu$ is a positive functional, the functions $\mu_\varepsilon$ are positive,
  by the properties of the standard mollifiers.
\end{proof}

\begin{proof}[Proof of Lemma \ref{lem:para-meas-approx}]
  For the parabolic case, let us denote
  \begin{displaymath}
    V=L^p(\R;W^{1,p}(\R^n))\quad\text{and}\quad V'=L^{p'}(\R;W^{-1,p'}(\R^n)).
  \end{displaymath}
 By the same extensions as before, it suffices to consider these spaces.
 Through Bessel potentials, we see that for $\mu\in V'$, we have
  \begin{displaymath}
    \mu(t)=T_{-1}g_t(x)
  \end{displaymath}
  for almost all $t\in\R$, where $g_t\in L^{p'}(\R^n)$. We define
  \begin{displaymath}
    g(x,t)=g_t(x),
  \end{displaymath}
  for $(x,t)\in \R^{n+1}$, and note that
  \begin{displaymath}
    c^{-1}\norm{g}_{L^{p'}(\R^{n+1})}\leq \norm{\mu}_{V'}\leq c\norm{g}_{L^{p'}(\R^{n+1})}.
  \end{displaymath}

  The approximation is then obtained by defining
  \begin{displaymath}
    \mu_\varepsilon=\eta_\varepsilon\ast\mu,
  \end{displaymath}
  where the standard mollification is taken in $\R^{n+1}$.
  We use the Fourier transform in a fashion similar to
  \eqref{eq:approx-lemma-proof1}, and get
  \begin{displaymath}
    \mu_\varepsilon(t)=T_{-1}[(\eta_\varepsilon\ast g)(t)]
  \end{displaymath}
  for almost all $t$, where $T_{-1}$ depends on the spatial variable only.
  From this we have
  \begin{displaymath}
    \norm{\mu-\mu_\varepsilon}_{V'}\leq c\norm{g-\eta_\varepsilon\ast
      g}_{L^{p'}(\R^{n+1})}\to 0
  \end{displaymath}
as $\varepsilon \to 0$.
 For  positivity, it once again suffices to note that the functions $\mu_\varepsilon$ are positive if the functional $\mu$ is.
\end{proof}

\end{document}